\documentclass[reqno, 11pt]{amsart}

\usepackage{setspace}
\usepackage{biblatex}
\usepackage{amsmath} 
\usepackage{amsthm}
\usepackage{amssymb}
\usepackage{hyperref}
\usepackage{color}
\usepackage{tikz}
\usepackage{setspace}
\usepackage{graphicx}
\usepackage{subcaption}
\usepackage{caption}
\usepackage{changepage}

\calclayout

\newtheorem{theorem}{Theorem}[section]
\newtheorem{lemma}[theorem]{Lemma}
\newtheorem{proposition}[theorem]{Proposition}
 
\newtheorem{corollary}[theorem]{Corollary}
\theoremstyle{definition}
\newtheorem{definition}[theorem]{Definition}
\newtheorem{example}[theorem]{Example}
\theoremstyle{remark}
\newtheorem{remark}[theorem]{Remark}

\newcommand{\mA}{\mathcal{A}}
\newcommand{\mB}{\mathcal{B}}

\newcommand{\mF}{\mathcal{F}}
\newcommand{\mU}{\mathcal{U}}
\newcommand{\mT}{\mathcal{T}}

\newcommand{\mP}{\mathcal{P}}
\newcommand{\mH}{\mathcal{H}}
\newcommand{\mI}{\mathcal{I}}
\newcommand{\mJ}{\mathcal{J}}

\newcommand{\bS}{\mathbf{S}}
\newcommand{\bSG}{\mathbf{S}_G}
\newcommand{\ra}{\rightarrow}
\newcommand{\sli}{\sum\limits}
\newcommand{\schild}[2]{{#1}\text{-child}({#2})}

\newcommand{\al}{\alpha}

\newcommand{\R}{\mathbb{R}}
\newcommand{\sse}{\subseteq}
\newcommand{\nsse}{\nsubseteq}

\newcommand{\cadet}{\text{cadet}}
\newcommand{\parent}{\text{parent}}
\newcommand{\lsib}{\text{lsib}}

\DeclareFieldFormat{doi}{\href{https://dx.doi.org/#1}{\textsc{doi}}}
\DeclareFieldFormat{url}{\href{#1}{Link}}

\title[counting formula for braid-type arrangement]{The geometry of a counting formula for deformations of the braid arrangement}

\author{Neha Goregaokar}
\address{Brandeis University}
\email{ngoregaokar@brandeis.edu}
\author{Aaron Z. Lin}
\address{Ladue Horton Watkins High School}
\email{aaronlin0924@gmail.com}
\date{\today}

\begin{document}
\onehalfspacing

\begin{abstract}

We consider real hyperplane arrangements whose hyperplanes are of the form $\{x_i - x_j = s\}$ for some integer $s$, which we call \textit{deformations of the braid arrangement}. In 2018, Bernardi gave a counting formula for the number of regions of any deformation of the braid arrangement $\mathcal{A}$ as a signed sum over some decorated trees. He further showed that each of these decorated trees can be associated to a region $R$ of the arrangement $\mathcal{A}$, and hence we can consider the contribution of each region to the signed sum. Bernardi also implicitly showed that for \textit{transitive} arrangements, the contribution of any region of the arrangement is $1$. 
We remove the transitivity condition, showing that for \textit{any} deformation of the braid arrangement the contribution of a region to the signed sum is $1$. 

This provides an alternative proof of the original counting formula, and sheds light on the geometry underlying the formula. We further use this new geometric understanding to better understand the contribution of a tree. 
\end{abstract}

\maketitle

\section{Introduction}

A real hyperplane arrangement is a finite collection of affine hyperplanes in a vector space $\mathbb{R}^n$, which decompose the space into polyhedral regions. The study of hyperplane arrangements lies at the intersection of combinatorics, algebraic geometry, and topology. By examining the combinatorial and geometric structure of the regions and their intersections, researchers have uncovered connections to optimization, machine learning, and theoretical physics, where hyperplanes often represent decision boundaries or phase transitions \cite{orlik1992arrangements, POSTNIKOV2000544, stanley2007hyperplane}.

A central question in the study of hyperplane arrangements is: {\it how many distinct regions are formed by a given arrangement?} This question lies at the heart of the subject because the number of regions encodes essential topological and combinatorial information about the arrangement. For example, Zaslavsky’s theorem expresses the number of regions as an evaluation of the characteristic polynomial of the arrangement’s intersection lattice \cite{zaslavsky1975facing}. Region counts also arise in applications ranging from optimization and computational geometry \cite{stanley2007hyperplane} to theoretical physics and machine learning \cite{orlik1992arrangements, POSTNIKOV2000544}.

For a well-studied family of arrangements known as \emph{deformations of the braid arrangement}, where each hyperplane has the form $x_i - x_j = s$ for some integer $s$, this question has been addressed by a counting formula given by Bernardi in \cite{BERNARDI2018466}. The formula is a signed sum on certain decorated plane trees called \textit{boxed trees}. Bernardi's work provided a unified combinatorial interpretation for classical families including the braid, Catalan, Shi, semiorder, and Linial arrangements. 

Further, in~\cite{BERNARDI2018466}, Bernardi proved a bijection (see Theorem~\ref{CatalanRegionBijection}) that gives us a natural way to restrict this signed sum to a region of the arrangement, obtaining the \textit{contribution of a region}. Hence, the question arises: for an arbitrary deformation of the braid arrangement, what is the contribution of a region? 

In~\cite{BERNARDI2018466}, Bernardi shows that for deformations of the braid arrangement which satisfy an extra condition known as \textit{transitivity}, the contribution of any region is $1$. In this paper, we answer the above question in full generality, showing that the contribution of any region of a deformation of the braid arrangement is $1$. Hence, we obtain the original Bernardi formula as an easy consequence of our result. This result not only broadens the applicability of Bernardi’s combinatorial machinery but also deepens our understanding of the topological structure underlying general hyperplane arrangements.

The paper is organized as follows. In Section~\ref{bgsec}, we introduce deformations of the braid arrangement. We further recall the Bernardi formula and define the contribution of a region. In Section~\ref{GASec}, we look at \textit{graphical arrangements}. We establish a bijection between the decorated trees in the signed sum and certain faces of the braid arrangement. Using this bijection, we show that the contribution of a region is in fact its Euler characteristic, which is known to be $1$. In Section~\ref{GenSec}, we generalize this result to all deformations of the braid arrangement via a similar bijection. Finally, in Section~\ref{sec:contribution}, we further restrict the signed sum to a tree and consider the \emph{contribution of a tree}. We use the bijection with faces to better understand and calculate this value. 

\section{Background}\label{bgsec}
In this section, we briefly review Bernardi's formula. First, we provide some background definitions and results on hyperplane arrangements. For non-negative integers $m$ and $n$, we use the following notation $[-m;n]:= \{-m, \ldots, n\}$.

\subsection{Hyperplane Arrangements}
\hfill\\

We begin with a fundamental hyperplane arrangement, the braid arrangement. 

\begin{definition}
    The \emph{braid arrangement} $\mB_n$ in $\mathbb{R}^n$ is the collection of hyperplanes $$\mathcal{B}_n := \{ H_{a,b} \mid 1 \leq a< b \leq n \},$$ where 
     $H_{a,b} = \{ (x_1, \ldots, x_n) \in \R^n \mid x_a - x_b = 0\}.$
\end{definition}

\begin{figure}[h]
\centering
\begin{tikzpicture}[scale=1.5]
\draw[thick,<-] (-1.732, -1) -- (1.732, 1);
\node at (-1.9, 1.1) {$x_1=x_2$};
\node at (-1.9, -1) {$x_2$};
\draw[thick,->] (0, -1.5) -- (0, 1.5);
\node at (0.2,1.4){$x_1$};
\node at (0.5, -1.4) {$x_2=x_3$};
\draw[thick,->] (-1.732, 1) -- (1.732, -1);
\node at (1.9, 1.1) {$x_1=x_3$};
\node at (1.9, -1) {$x_3$};
\end{tikzpicture}
\caption[]{Braid arrangement for $n=3$. Each line corresponds to an equation of the form $x_i=x_j$. \footnotemark}\label{fig:braid}
\end{figure}
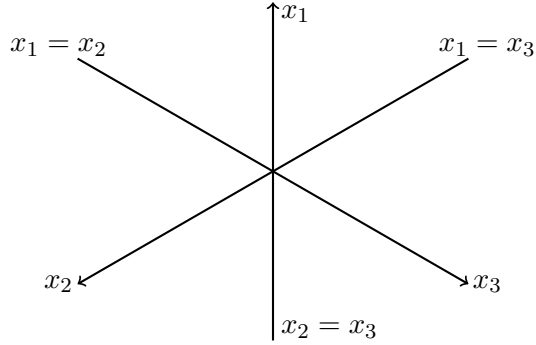

\footnotetext{Note that the hyperplane \( H = \{(x_1, x_2, x_3) \in \mathbb{R}^3 \mid x_1 + x_2 + x_3 = 0\} \) is orthogonal to every hyperplane in \( \mathcal{B}_3 \). Thus, we visualize \( \mathcal{B}_3 \) by depicting the intersections of its hyperplanes with \( H \).}

Figure~\ref{fig:braid} depicts the braid arrangement $\mB_3$. While it is easy to see that the number of regions of the braid arrangement is $n!$, the problem becomes much more challenging for generalizations of the braid arrangement $\mathcal{B}_n$ where we consider hyperplanes of the form of $x_a-x_b = s$, where $s$ is an integer. 

\begin{definition}
    Let $\bS = (S_{a,b})_{1 \leq a < b \leq n}$ be a collection of finite sets of integers. We define the \emph{$\bS$-braid arrangement}, denoted $\mathcal{A}_{\bS}$, as the collection of the following hyperplanes:
    $$\mA_{\bS} = \{H_{a,b,s} \mid 1 \leq a < b \leq n, s \in S_{a,b}\},$$ where
    $H_{a,b,s} = \{ (x_1, \ldots, x_n) \in \R^n \mid x_a - x_b = s,\, 1 \leq a<b\leq n, \, s \in S_{a,b} \}.$
    We collectively call such arrangements \emph{deformations of the braid arrangement}. 
\end{definition}

For $1 \leq a < b \leq n$, we also define the following sets related to $S_{a,b}$: $$S^{-}_{a,b} = \{ s \geq 0 \mid -s \in S_{a,b} \} \mbox{ and }
S^{-}_{b,a} = \{ s > 0 \mid s \in S_{a,b} \} \cup \{0\}.$$

A well studied example of $\bS$-braid arrangement is the $m$-Catalan arrangement, where $S_{a,b} = [-m;m]$ for all $1 \leq a < b \leq n$. The number of regions formed by the \( m \)-Catalan arrangement $\mathcal{C}_n^{(m)}$ is given by
$n^{-1} \binom{(m+1)n}{n - 1}n!$, which is also equal to the number of labeled $(m+1)$-ary trees. Bernardi's formula, introduced in the next subsection, gives us this count. 
This result reflects the deep combinatorial structure of these arrangements. For further details, see \cite{POSTNIKOV2000544}. The counting problem for a more general collection of sets $\bS$ is far more challenging.

\subsection{The Bernardi Formula}
\hfill\\

Next, we describe the combinatorial framework underlying the Bernardi formula, which relies on special labeled rooted plane trees called \textit{$\bS$-boxed trees}.

We first establish some notations. We denote by $\mathcal T$ the set of rooted plane trees with labeled nodes, and $\mathcal{T}^{(m)}(n)$ the set of $(m+1)$-ary rooted plane trees with $n$ labeled nodes in $\mathcal T$. By definition, a node $j \in [n]$ of $T \in \mathcal{T}$ has exactly $m+1$ (ordered) children, which are denoted by $\schild{0}{j}$, $\schild{1}{j}, \ldots, \schild{m}{j}$ respectively. Further, if $v$ is not the root node, we denote by $\parent(v)$ the parent of $v$ and $\lsib(v)$ the number of siblings of $v$ to the left of $v$.

\begin{figure}[h]
    \centering
    \includegraphics[width=0.45\linewidth]{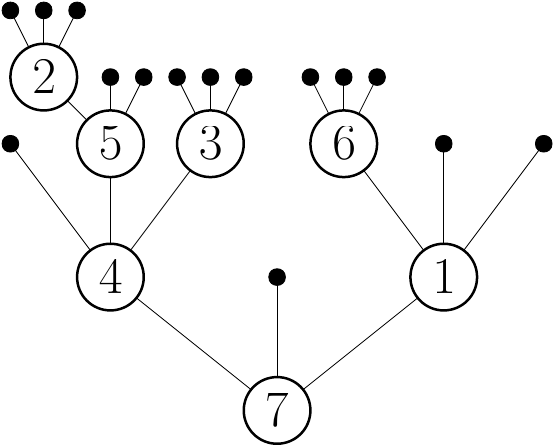}
    \caption{A tree in $\mT^{(2)}(7)$.}
    \label{fig:treeexample}
\end{figure}

\begin{definition}\label{cadetseqdef}
    Let $T$ be a tree in $\mT$.
    \begin{itemize}
        \item Let $u$ be a node of $T$. The \emph{cadet} of $u$ is the rightmost child of $u$ that is not a leaf, if such a child exists. We denote this node as $\cadet(u)$. 
        \item A \emph{cadet sequence} is a nonempty sequence of nodes $ (v_1, v_2, \dots, v_k) $ such that for all $ i \in \{1, \dots, k-1\} $, we have $ v_{i+1} = \cadet(v_i) $.
    \end{itemize}
\end{definition}

\begin{example}
    In Figure~\ref{fig:treeexample}, $\cadet(7) = 1$, $\cadet(1) = 6$, $\cadet(4) = 3$ and $\cadet(5) = 2$. The nodes $2, 3$ and $6$ do not have a cadet as all their children are leaves. Further, $(7,1,6)$, $(7,1)$, $(1,6)$, $(4,3)$, $(5,2)$, as well as $(i)$ for all $i \in [7]$ are all the cadet sequences.
\end{example}

\begin{definition}\label{scadetseqdef}
     An \emph{$ \bS$-cadet sequence} is a cadet sequence $(v_1, \ldots, v_k)$ such that for all $1 \le i < j \le k$, $\sum_{p=i+1}^j \mathrm{lsib}(v_p) \notin S^-_{v_i, v_j}$.
\end{definition}

\begin{definition}\label{boxedtree}
An \emph{$ \bS $-boxed tree} is a pair $ (T, B) $, with $ T\in \mathcal{T}^{(m)}(n)$, where $m = \max(|s|, s \in S_{a,b}, 1 \leq a < b \leq n)$ and $ B $ is a set of $ \bS $-cadet sequences that partition the set of nodes of $ T $. That is, every node of $ T $ appears in exactly one cadet sequence in $ B $. Each element of $B$ is called a \emph{box}. 
\end{definition}

\begin{figure}[h]
    \centering
    \includegraphics[width=0.45\linewidth]{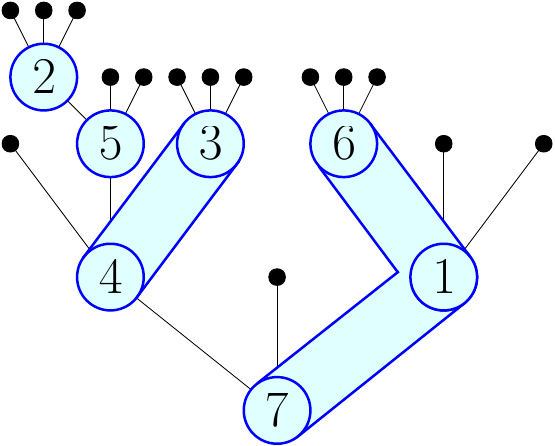}
    \caption{An example for an $\bS$-boxed tree for $S_{a,b} = \{-2,1\}$ for all $a<b$. }
    \label{fig:scadet}
\end{figure}

\begin{example}
    Figure~\ref{fig:scadet} is an example of an $\bS$-boxed tree $(T, B)$ for $\bS = (S_{a,b})_{1 \leq a < b \leq 7}$ with $S_{a, b} = \{-2,1\}$ for all $1 \leq a < b \leq 7$. The $\bS$-cadet sequences in $B$ are denoted by nodes connected by continuous regions of light blue. Specifically, $B = \{(2), (5), (4,3), (7,1,6)\}$. 

    Now, for all $a < b$, $S_{a,b}^- = \{2\}$ and $S_{b,a}^- = \{0,1\}$. Also, $\lsib(1) = 2$, $\lsib(6) = 0$ and $\lsib(3) = 2$. Clearly, $\lsib(3) \notin S_{4,3}^-$, so $(4,3)$ is an $\bS$-cadet sequence. Further, as $\lsib(1) \notin S_{7,1}^{-}$, $\lsib(6) \notin S_{1,6}^{-}$ and $\lsib(1) + \lsib(6) \notin S_{7,6}^{-}$, $(7,1,6)$ is an $\bS$-cadet sequence. 

    Moreover, note that although $(5,2)$ is a cadet sequence, as $\lsib(2) = 0 \in S_{5,2}^-$, $(5,2)$ is \textit{not} an $\bS$-cadet sequence.
\end{example}

We now state the Bernardi formula for the number of regions of an $\bS$-braid arrangement.
\begin{theorem}[Bernardi Formula for $ \bS $-braid arrangements {\cite[Theorem 4.2]{BERNARDI2018466}}] \label{bernardi}

\noindent Let $ \bS = (S_{a,b})_{1 \leq a < b \leq n} $ be a collection of finite sets of integers, and let $ \mA_{\bS}$ be the $\bS$-braid arrangement in $\R^n$.
The number of regions of $ \mathcal{A}_{\bS} $ is given by
$$
r_{\bS}(n) = \sum_{(T,B) \in \mU_{\bS}(n)} (-1)^{n - |B|},
$$
where $|B|$ is the number of boxes in the $\bS$-boxed tree $(T, B)$ and $\mU_{\bS}(n)$ denotes the set of $\bS$-boxed trees with $n$ nodes. 
\end{theorem}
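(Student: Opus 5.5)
We will prove Theorem~\ref{bernardi} geometrically: we partition the signed sum according to regions and show that each region contributes exactly $1$. The steps are as follows.

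\textbf{Step 1 (assigning trees to regions).} We use Bernardi's bijection between regions of the $m$-Catalan arrangement $\mathcal{C}_n^{(m)}$ and trees in $\mT^{(m)}(n)$, with $m=\max|s|$ as in Definition~\ref{boxedtree}. Since every hyperplane of $\mA_{\bS}$ belongs to $\mathcal{C}_n^{(m)}$, each Catalan region lies in a unique region $R$ of $\mA_{\bS}$. We assign each boxed tree $(T,B)$ to the region containing the Catalan region of $T$. Then
\[
r_{\bS}(n)=\sum_{R}\sum_{(T,B)\mapsto R}(-1)^{n-|B|},
\]
and it suffices to show that each inner sum, the contribution of $R$, equals $1$.

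\textbf{Step 2 (from boxings to faces).} Fix a region $R$ of $\mA_{\bS}$, and fix a tree $T$ whose Catalan region lies in $R$. Choosing a boxing $B$ amounts to deciding which cadet edges $(v_i,\cadet(v_i))$ are cut. Each cadet edge that is kept imposes an equality of the form $x_{v_i}-x_{v_j}=\sum_{p=i+1}^{j}\lsib(v_p)$ along the box. So a box records a chain of coordinates glued together at integer offsets. The $\bS$-condition $\sum_{p=i+1}^{j}\lsib(v_p)\notin S^-_{v_i,v_j}$ says precisely that the gluing does not occur along a hyperplane of $\mA_{\bS}$. We therefore aim to build a bijection between pairs $(T,B)$ assigned to $R$ and the faces $F$ of the Catalan arrangement restricted to $R$, meaning faces of the subdivision of $R$ cut out by the hyperplanes of $\mathcal{C}_n^{(m)}$ that do not belong to $\mA_{\bS}$. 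Under this bijection, a face of codimension $n-|B|$ (with $|B|$ counting the boxes, i.e., the free coordinates) should correspond to a boxing with $|B|$ boxes. We would first treat graphical arrangements, where $S_{a,b}\subseteq\{0\}$ and the braid arrangement plays the role of the Catalan one. In that case we identify the faces with ordered set partitions of $[n]$, and the boxings with the ways of merging consecutive cadet nodes.

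\textbf{Step 3 (Euler characteristic).} With the bijection in hand, the contribution of $R$ becomes $\sum_F (-1)^{\operatorname{codim} F}$, taken over the open faces of this subdivision of $R$ (after quotienting by the lineality direction $(1,\dots,1)$). This is the Euler characteristic with compact supports of the open convex set $R$, up to a global sign $(-1)^{\dim}$. Alternatively, we can count faces of the closure minus the boundary. For an open cell of dimension $d$ this gives $(-1)^d$, and the signs then normalize to $1$. Summing over $R$ recovers Theorem~\ref{bernardi}.

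\textbf{Main obstacle.} The hard part is Step 2: showing that the map from boxed trees to faces is well defined and bijective when $\mA_{\bS}$ is not transitive. In that setting a face of the restricted Catalan subdivision may lie inside $R$ even though pairwise conditions hold only for non-consecutive elements of a box. We would construct the inverse map by running Bernardi's tree-building algorithm on a generic point of the face, reading off the equalities as cadet links. We would then check by induction on the length of a cadet sequence that the $\bS$-condition for all pairs $i<j$ is exactly the condition that the face lies in the interior of $R$.
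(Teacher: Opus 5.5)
Your outline follows the paper's route. Theorem~\ref{bernardi} is obtained by summing Theorem~\ref{generalcase} over regions. Theorem~\ref{generalcase} is proved by matching $\mU_{\bS}(R)$ with the faces of the $m$-Catalan arrangement that lie in $R$ and on no hyperplane of $\mA_{\bS}$, followed by the Euler characteristic count of your Step~3.

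\textbf{The gap is Step~2.} It carries all the content, and you only announce it. The paper does not build this bijection by hand. It takes the bijection $\Psi$ of Theorem~\ref{CatalanFacesBij} between faces of the $m$-Catalan arrangement and marked trees, and composes it with $(T,B)\mapsto(T,\mu_B)$, where $\mu_B$ is the set of edges inside boxes. The face $\Psi(T,\mu_B)$ satisfies strict inequalities for every pair of nodes in different boxes. So it lies on $\{x_a-x_b=s\}$ only if $a,b$ share a box and the equation is the one obtained by summing the links between them. Hence, once zero-offset links are required to increase (see below), \emph{the face avoids $\mA_{\bS}$} is literally the all-pairs $\bS$-condition, and the dimension of the face is the number of boxes.

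So non-transitivity is not where the difficulty lies. The difficulty is the $\bS$-independent fact that each Catalan face has a canonical incident region whose tree encodes all equalities of the face as cadet links; that is exactly Theorem~\ref{CatalanFacesBij}. Running Bernardi's algorithm on a generic point of the face does not prove this. Such a point lies on Catalan hyperplanes, so you must specify which incident region is chosen. You must then show that the hyperplanes through the face are generated by cadet edges of that region's tree.

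Your description of the $\bS$-condition also needs two corrections.

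\textbf{Sign.} With the conventions of $\Phi$ and $\Psi$, a link $v_p=\schild{s}{v_{p-1}}$ forces $x_{v_p}-x_{v_{p-1}}=s$. A box therefore gives $x_{v_j}-x_{v_i}=\sum_{p=i+1}^{j}\lsib(v_p)$; you have the sign reversed. The condition $\sum_{p=i+1}^{j}\lsib(v_p)\notin S^-_{v_i,v_j}$ says that this hyperplane is not in $\mA_{\bS}$.

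\textbf{Zero-offset convention.} More importantly, the $\bS$-condition is \emph{not} equivalent to the face lying in the interior of $R$. Since $0\in S^-_{b,a}$ for every $b>a$, regardless of $\mA_{\bS}$, it also forbids zero-offset links that decrease in label. Take the edgeless graph on $[2]$, so $\mA_G=\emptyset$ and $R=\R^2$. The boxed trees $(12,\{(1,2)\})$ and $(21,\{(2,1)\})$ both glue to the face $\{x_1=x_2\}\subseteq R$, but only the first is an $\bS_G$-boxed tree. Admitting both would give the signed sum $1-1+1-1=0$ instead of $1$. This convention is the marked-tree requirement $j<\schild{0}{j}$, i.e.\ the tie-break that makes $(T,B)\mapsto F$ injective. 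So the equivalence you propose to check in your last sentence is false as stated. The same applies to your graphical step: you may merge only increasing runs of the path (Lemma~\ref{graphcadet}), not arbitrary consecutive cadet nodes.
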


\subsection{Restriction to a region}
\hfill\\

Bernardi defined a total order $\prec_T$ on the vertices of a tree $T$ in $\mT^{(m)}(n)$ (see Section 8.1 of~\cite{BERNARDI2018466} for a precise definition). Using this ordering, he proved the following bijection between trees in $\mT^{(m)}(n)$ and regions of the $m$-Catalan arrangement, that is, the $\bS$-braid arrangement with $S_{a,b} = [-m; m]$ for all $1 \leq a < b \leq n$. 
\begin{theorem}[{\cite[Theorem 8.8]{BERNARDI2018466}}]\label{CatalanRegionBijection}
    Let $T \in \mT^{(m)}(n)$ be a tree. We define the corresponding polyhedron $\Phi(T) \subseteq \R^n$ by: $$\Phi(T) = \left(\bigcap_{\substack{(i,j,s) \in \text{Triple}_n^m \\ i \prec_T \schild{s}{j}}} \{x_i - x_j < s\} \right) \cap \left(\bigcap_{\substack{(i,j,s) \in \text{Triple}_n^m \\ i \succeq_T \schild{s}{j}}} \{x_i - x_j > s\}\right) $$
    where $Triple_n^m = \{(i,j,s) \mid i, j \in [n], i \neq j, \, s \in [0; m] \text{ such that } s > 0 \text{ or } i>j\}. $ Then, $\Phi$ defines a bijection between $\mT^{(m)}(n)$ and regions of the $m$-Catalan arrangement. 
\end{theorem}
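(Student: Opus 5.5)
The plan is to prove three things: $\Phi(T)$ is a region of the $m$-Catalan arrangement (nonempty and not cut by any hyperplane); $\Phi$ is injective; and the two sets have the same size, so surjectivity follows. The count of regions of $\mathcal{C}_n^{(m)}$ is $n^{-1}\binom{(m+1)n}{n-1}n!$, recalled in Section~\ref{bgsec}, and this equals $|\mT^{(m)}(n)|$, the number of labeled $(m+1)$-ary trees. So injectivity together with well-definedness gives the bijection, and I will not construct the inverse directly.

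\emph{Step 1: $\Phi(T)$ is a region.} For every $i\neq j$ and $s\in[0;m]$, exactly one of $\{x_i-x_j<s\}$ or $\{x_i-x_j>s\}$ appears in the definition. This is because $\prec_T$ is a total order on nodes and child positions, so every hyperplane of the $m$-Catalan arrangement is avoided. Hence $\Phi(T)$ is an open intersection of half-spaces, one per hyperplane, and it suffices to show it is nonempty. I would build a point explicitly by induction along the tree. Set $x_{\mathrm{root}}=0$. If $i=\schild{s}{j}$, set $x_i=x_j+s-\epsilon_i$, where the $\epsilon_i$ are small positive perturbations chosen so that the values $x_k$ and $x_j+s$ are sorted exactly as $\prec_T$ sorts the nodes $k$ and the slots $\schild{s}{j}$. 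The condition $i\prec_T\schild{s}{j}\iff x_i<x_j+s$ is precisely the statement that the linear order of the multiset $\{x_j+s\}$ interleaved with $\{x_i\}$ agrees with $\prec_T$.

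So Step 1 reduces to the following consistency check: the order $\prec_T$ is \emph{shift-compatible}. Concretely, $i\prec_T\schild{s}{j}$ and $j\prec_T \schild{t}{i}$ must be compatible in the way forced by $x_i-x_j<s$ and $x_j-x_i<t$; that is, $\prec_T$ must not require $s+t<0$. This property has to be extracted from the definition of $\prec_T$ in Section~8.1 of~\cite{BERNARDI2018466}. I would verify it by induction on the depth of the lowest common ancestor of $i$ and $j$, using that $\prec_T$ orders subtrees contiguously, by slot index. I expect this to be the main obstacle, since everything hinges on the exact combinatorics of $\prec_T$.

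\emph{Step 2: injectivity.} Knowing $\Phi(T)$ tells us, for every node $i$ and every slot $\schild{s}{j}$, which one comes first under $\prec_T$. From this data I would recover $T$ recursively. The root is the $\prec_T$-minimal node among all nodes, equivalently the one that no slot condition places after a child position of another node. The parent and slot of a node $i$ are the pair $(j,s)$ for which $\schild{s}{j}$ is the $\prec_T$-immediate successor of $i$ among slot positions. Once this is checked against the definition of $\prec_T$, distinct trees give distinct sign vectors and hence distinct regions. Combined with the cardinality equality, this proves that $\Phi$ is a bijection.
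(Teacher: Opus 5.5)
\textbf{There is a genuine gap: neither nonemptiness nor injectivity is actually proved.} The paper quotes this theorem from Bernardi without proof, so I am judging your argument on its own.

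Your architecture is sound. Each hyperplane of the $m$-Catalan arrangement corresponds to exactly one triple of $\text{Triple}_n^m$, so $\Phi(T)$ is a region as soon as it is nonempty, and injectivity plus the known cardinality equality gives bijectivity. But both substantive steps are deferred to properties of $\prec_T$ that you never derive, and the concrete claims you make about them are wrong.

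\emph{Step 1.} Your point has the wrong sign. If $i=\schild{s}{j}$ and $(i,j,s)\in\text{Triple}_n^m$, then $i\succeq_T\schild{s}{j}$ holds by equality, so $\Phi(T)\subseteq\{x_i-x_j>s\}$. You instead set $x_i=x_j+s-\epsilon_i$; for $m=0$ your point decreases along $v_1\cdots v_n$, whereas $\Phi(v_1\cdots v_n)=\{x_{v_1}<\cdots<x_{v_n}\}$. Your ``shift-compatibility'' condition is vacuous: $s,t\ge 0$, and $(i,j,0)$, $(j,i,0)$ are never both triples. No pairwise check could suffice anyway, since $x_a-x_b>1$, $x_b-x_c>1$, $x_c-x_a>1$ are pairwise compatible but jointly infeasible.

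The tool you plan to use, that $\prec_T$ orders subtrees contiguously, is false. Take $m=1$, $n=3$, and $T$ with root $1$, $2=\schild{0}{1}$, $3=\schild{1}{1}$. On $\Phi(T)$, $i\succeq_T i$ forces $x_2-x_1>0$ and $x_3-x_1>1$. Also $x_2-x_1<1$, since otherwise $2\succeq_T\schild{1}{1}=3\succeq_T\schild{0}{1}=2$. The region $\{0<x_2-x_1<1,\ x_3-x_2>1\}$ is taken by the tree with $2=\schild{0}{1}$, $3=\schild{1}{2}$: that tree is forced into $\{x_2-x_1>0,\ x_3-x_2>1\}$, and the part with $x_2-x_1>1$ is the whole region of the tree with $2=\schild{1}{1}$, $3=\schild{1}{2}$. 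So by injectivity $0<x_3-x_2<1$ on $\Phi(T)$. The triples $(3,2,0)$ and $(3,2,1)$ then give $\schild{0}{2}\prec_T 3\prec_T\schild{1}{2}$, so node $3$ sits strictly between two leaves of the subtree of $2$.

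What must actually be shown is that at a corrected point ($x_i$ slightly above $x_j+s$), $x_k<x_j+s\iff k\prec_T\schild{s}{j}$ holds for every $(k,j,s)\in\text{Triple}_n^m$. For your construction this requires $\prec_T$ to refine the order by level (the sum of slot indices along the root path), with within-level ties realizable by a single choice of perturbations. That can only be checked from Bernardi's definition of $\prec_T$.

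\emph{Step 2.} The same problem recurs, and your recovery rules fail on small cases. The region records $i$ versus $\schild{s}{j}$ only for $(i,j,s)\in\text{Triple}_n^m$. Take $m=1$, $n=2$. The trees (root $1$, $2=\schild{1}{1}$), (root $2$, $1=\schild{1}{2}$) and (root $1$, $2=\schild{0}{1}$) are forced by $i\succeq_T i$ and injectivity into $x_2-x_1>1$, $x_1-x_2>1$ and $0<x_2-x_1<1$ respectively. Hence the tree with root $2$ and $1=\schild{0}{2}$ has $\Phi=\{0<x_1-x_2<1\}$. There node $1$, whose only triple is $(1,2,1)$, is also never placed after a slot of another node, so your criterion does not single out the root.

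Likewise, $i$ \emph{is} the vertex $\schild{s}{\parent(i)}$, so its relation to its own slot is $\succeq_T$ with equality, not ``immediate successor.'' Deciding which slot $\preceq_T i$ equals $i$ needs slot-to-slot comparisons that the sign vector does not record. When $s=0$ and $i<\parent(i)$, even the comparison of $i$ with its own slot is missing from $\text{Triple}_n^m$.

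A valid Step 2 must either exhibit, for $T\neq T'$, a triple on which $\Phi(T)$ and $\Phi(T')$ disagree, or reconstruct $T$ from a point of the region. Either way, Bernardi's definition of $\prec_T$ is indispensable.
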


Figure \ref{fig:catalan_bijection} illustrates the bijection for the 1-Catalan arrangement.

\begin{figure}
    \centering
    \includegraphics[width=\linewidth]{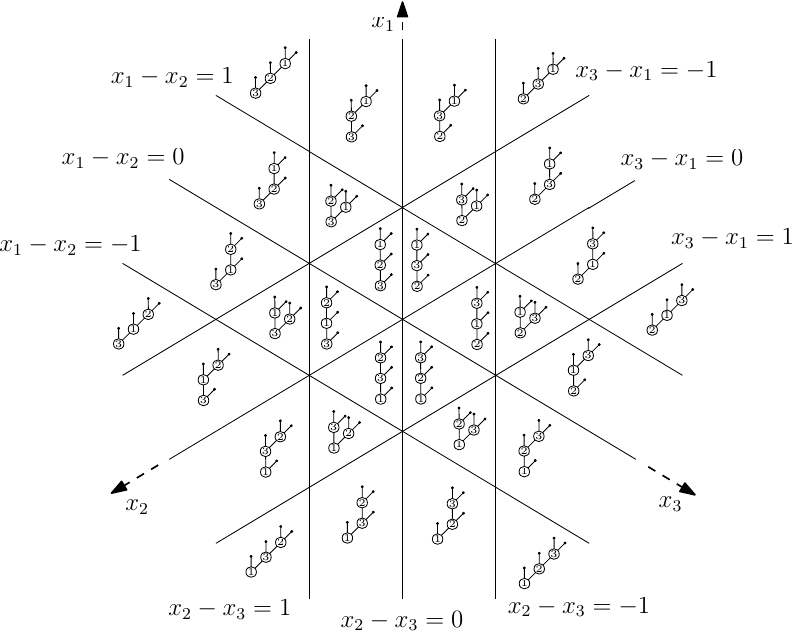}
    \caption{Bijection between the set of trees in $\mT^{(1)}(3)$ and the regions of the 1-Catalan arrangement corresponding to each tree.}
    \label{fig:catalan_bijection}
\end{figure}

Note that an $\bS$-braid arrangement is a subarrangement of the $m$-Catalan arrangement where $m = \max(|s|\, , \, s \in S_{a, b}, \, 1 \leq a < b \leq n)$. Hence, any region of the $\bS$-braid arrangement is a union of regions of the $m$-Catalan arrangement. By Theorem~\ref{CatalanRegionBijection}, any region $R$ of the $\bS$-braid arrangement is associated to a set of trees in $\mT^{(m)}(n)$. We denote this set by $T_R$.

\begin{definition}
    Let $R$ be a region of the $\bS$-braid arrangement. We define 
    $$U_{\bS}(R) : = \{(T, B) \mid T \in T_R, \, B \text{ is an $\bS$-boxing of $T$}\},$$ that is, $U_{\bS}(R)$ is the set of $\bS$-boxed trees where the underlying tree is associated to the region $R$. 
\end{definition}

\section{The Contribution of a Region}

In this paper, we show that the following region-wise refinement of the Bernardi formula holds for all deformations of the braid arrangement.
\begin{theorem}\label{generalcase}

Let $\bS = (S_{a,b})_{1 \leq a < b \leq n} $ be a collection of finite sets of integers and let $ \mathcal{A}_S $ be the corresponding $\bS$-braid arrangement. Let $R$ be a region of $\mA_{\bS}$. Then,
\begin{equation}\label{eqn:bernardi_region}
    \sum_{(T,B) \in \mU_{\bS}(R)} (-1)^{n-|B|} = 1.
\end{equation}
\end{theorem}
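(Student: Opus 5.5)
The plan is to interpret the signed sum over $\mU_{\bS}(R)$ as an Euler characteristic computation. The paper's outline points this way: boxed trees are to be matched bijectively with certain faces, and the sign $(-1)^{n-|B|}$ with a dimension count.

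\emph{Step 1: boxes as faces.} Fix $m=\max|s|$ and consider the $m$-Catalan arrangement $\mathcal{C}$, of which $\mA_{\bS}$ is a subarrangement. For a boxed tree $(T,B)$ with $T\in T_R$, I would associate a face $F(T,B)$ of $\mathcal{C}$ lying inside the closure of $\Phi(T)$. Each box $(v_1,\dots,v_k)$ imposes the equalities
$$x_{v_i}-x_{v_{i+1}}=\mathrm{lsib}(v_{i+1}),\qquad 1\le i<k,$$
coming from the fact that in $\prec_T$ a cadet sits immediately adjacent to the boundary hyperplane $x_{v_i}-x_{v_{i+1}}=\mathrm{lsib}(v_{i+1})$ of $\Phi(T)$. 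All other inequalities defining $\Phi(T)$ are kept strict. The resulting face has dimension $|B|$, since each box keeps one free translation parameter. Modulo the global translation line, $(-1)^{n-|B|}$ is then the parity of its codimension.

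\emph{Step 2: the $\bS$-condition.} Along a box, the differences $x_{v_i}-x_{v_j}$ equal $\sum_{p=i+1}^{j}\mathrm{lsib}(v_p)$. The $\bS$-cadet condition says that none of these values lies in $S^-_{v_i,v_j}$, i.e.\ that no hyperplane of $\mA_{\bS}$ contains the face. Hence $F(T,B)$ lies in the open region $R$ rather than on its boundary. I would then prove that $(T,B)\mapsto F(T,B)$ is a bijection from $\mU_{\bS}(R)$ onto a family of faces of $\mathcal{C}$ contained in $R$: namely, for each face, exactly one tree and boxing produce it. The candidate description of the image is faces whose defining equalities form chains of the cadet type.

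This bijection is the main obstacle. Checking it requires understanding Bernardi's order $\prec_T$ well enough to show three things: every face reached uses equalities that can be organised into cadet sequences, the tree is recovered uniquely from the face (probably via the ``right-most'' adjacent region), and the cadet sequences are exactly the chains of tight adjacencies. I would first treat graphical arrangements, where $S_{a,b}\subseteq\{0\}$ or $\emptyset$ and $m\le 1$ can be reduced to braid faces, and then extend the argument. In the general case, the extension amounts to replacing permutations by Bernardi's tree order.

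\emph{Step 3: Euler characteristic.} Given the bijection, the left side of \eqref{eqn:bernardi_region} becomes $\sum_F(-1)^{\operatorname{codim}F}$ over the selected faces. I would show this equals the Euler characteristic with compact supports of a set $X\subseteq R$ that is convex up to a half-open modification: the relatively open faces of $\mathcal{C}$ in $R$, taken with a consistent tie-breaking rule that chooses one of each pair of opposite sides. For example, $X$ could be $R$ intersected with a shifted generic translate, so that $X$ is a bounded-below cell complex. Using additivity of $\chi_c$ and a shelling or contractibility argument, this sum is $1$, mirroring the statement that a region has Euler characteristic $1$. Summing over all regions then recovers Theorem~\ref{bernardi}, which serves as a consistency check.
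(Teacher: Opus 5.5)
Your outline matches the paper's: boxed trees correspond to faces of the $m$-Catalan arrangement $\mathcal{C}$ with $\dim=|B|$, followed by an Euler characteristic count. But the bijection you call the main obstacle is essentially the whole content of the proof, and you do not supply it.

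\textbf{The missing bijection.} The missing idea is the known bijection $\Psi$ of Theorem~\ref{CatalanFacesBij}. It matches faces of $\mathcal{C}$ with marked trees $(T,\mu)$, where $\mu$ is an arbitrary set of cadet edges subject to $j<\schild{0}{j}$ on marked $0$-edges. Since cadet edges form disjoint downward paths, a boxing is exactly such a marking: the boxes are the $\mu$-components. Every $\bS$-boxing also satisfies the $0$-edge condition, because $0\in S^-_{u,v}$ whenever $u>v$. Hence $\Psi\circ\alpha$ is already a bijection from all boxed trees onto all faces of $\mathcal{C}$, with dimension equal to the number of boxes. Injectivity, surjectivity and recovery of $T$ from the face then come for free, with no analysis of $\prec_T$.

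What is left is your Step 2. The face $\Psi(T,\mu)$ lies on a hyperplane of $\mathcal{C}$ involving $x_a,x_b$ only if $a\sim_\mu b$. For pairs within a box, the $\bS$-cadet condition says exactly that these hyperplanes are not in $\mA_{\bS}$. For the value $0$, the extra requirement $v_i<v_j$ is automatic from the marking condition.

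\textbf{Sign error.} Your sign is reversed. In $\Phi(T)$ one has $x_v-x_u>s$ for $v=\schild{s}{u}$, so the box equalities are $x_{v_{i+1}}-x_{v_i}=\mathrm{lsib}(v_{i+1})$. This is what matches $S^-_{v_i,v_j}$, which records the hyperplanes $x_{v_j}-x_{v_i}=s$. With your sign, Step 2 tests the wrong hyperplanes whenever the partial sum is positive.

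\textbf{Step 3.} Step 3 misidentifies the image, and the tie-breaking set $X$ is neither needed nor justified. A face $F$ of $\mathcal{C}$ contained in the open region $R$ automatically avoids every hyperplane of $\mA_{\bS}$. Its tree satisfies $\Phi(T)\subseteq R$, because $F\subseteq\overline{\Phi(T)}$ and $R$ is open, so $\Phi(T)$ meets $R$ and is therefore contained in it. Conversely, a face in the image avoids $\mA_{\bS}$ and lies in $\overline{\Phi(T)}\subseteq\overline{R}$, hence in $R$.

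So the image of $\mU_{\bS}(R)$ is precisely the set of all faces of $\mathcal{C}$ inside $R$. There is no choice between opposite sides to make: faces on hyperplanes of $\mA_{\bS}$ are excluded simply because they are not in $R$, and every face inside $R$ is hit. These faces partition $R$, so the sum equals $(-1)^n\chi_c(R)=1$, since $R$ is an open convex polyhedron of dimension $n$. Any ``half-open modification'' or ``shifted generic translate'' $X\neq R$ would be the wrong set, since the image is all faces of $R$.

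(Minor: for graphical arrangements $m=0$, not $m\le 1$.)
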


\begin{remark}
    Note that for a $\bS$-braid arrangement, $\mA_{\bS}$, by the above theorem, we have   
    \begin{align*}
        \sum_{(T,B) \in \mU_{\bS}(n)} (-1)^{n - |B|} &= \sli_{R \text{ region of } \mA_{\bS}}\sum_{(T,B) \in \mU_{\bS}(R)} (-1)^{n-|B|} \\&= \sli_{R \text{ region of } \mA_{\bS}} 1 \\&= \# \text{ regions of } \mA_{\bS}.
    \end{align*}

    Hence, the Bernardi formula (Theorem~\ref{bernardi}) is a direct consequence of Theorem~\ref{generalcase}. 
\end{remark}

In~\cite{BERNARDI2018466}, Bernardi showed that Equation~\eqref{eqn:bernardi_region} holds for \textit{transitive} arrangements, that is arrangements which satisfy the condition that if $s \notin S_{a,b}^{-}$ and $t \notin S_{b ,c}^{-}$, then $s+t \notin S_{a,c}^{-}$. 

We start in Section~\ref{GASec} by proving Theorem~\ref{generalcase} for graphical arrangements, a very common group of non-transitive arrangements, and continue in Section~\ref{GenSec}, where we extend our proof to show that it holds for all general deformations of the braid arrangement.

\smallskip
\subsection{Graphical Arrangements}\label{GASec}\hfill\\

A graphical arrangement is a subarrangement of the braid arrangement, where the subset of hyperplanes is determined by the adjacency structure of a graph. In this paper, we use \emph{graph} to mean an undirected finite graph without loops or multiple edges. 

\begin{definition}
Let $G = ([n], E)$ be a graph. The \emph{graphical arrangement} associated with $G$, denoted $\mA_G$ is the collection of the hyperplanes $$\mathcal{A}_G := \{H_{a,b} \mid\{a,b\}\in E\},$$ where $ H_{a,b} = \{ (x_1, \ldots, x_n) \in \R^n \mid x_a - x_b = 0\}.$ 
\end{definition}

\begin{remark}
Let $G = ([n], E)$ be a graph. For $a< b$, define $S_{a, b} = \begin{cases}
     \{0\} & \text{ if } \{a,b\} \in E \\
     \emptyset & \text{ otherwise}
 \end{cases}.$ Then, for $\bS_G = (S_{a,b})_{1\leq a < b \leq n}$, the graphical arrangement $\mA_G$ is the $\bS_G$-braid arrangement. 
\end{remark}

We next prove that the region-wise Bernardi formula, that is, Equation~\eqref{eqn:bernardi_region}, holds for any region in a graphical arrangement. The key to our proof is a bijection between $\bS_G$-boxed trees and certain faces of the braid arrangement. This then equates the signed sum in Equation~\eqref{eqn:bernardi_region} to the Euler characteristic of a region, which is always equal to 1 (as regions of hyperplane arrangements are contractible). 

\begin{remark}\label{rem:GBoxedTrees}
    Note that for graphical arrangements, $m = \max(|s|, s \in S_{a,b}, 1 \leq a < b \leq n) = 0$. Hence, $\mT^{(m)}(n) = \mT^{(0)}(n)$, the set of labeled unary trees. Further, any labeled unary tree can be uniquely represented as $T = v_1 \ldots v_n$ where $v_1$ is the root of $T$ and $v_{i+1}$ is the child of $v_i$ for all $i \in [n-1]$. Here, $\{v_1, \ldots, v_n\} = [n]$.

    Hence, for a $\bS_G$-boxed tree $(T, B) \in \mU_{\bS_G}(n)$, with $T = v_1\ldots v_n$ as above, $B$ will be of the form $\{(v_1, \ldots, v_{i_1}), (v_{i_1 + 1}, \ldots, v_{i_2}) \ldots, (v_{i_k + 1}, \ldots, v_n)\}$, where for all $0 \leq j \leq k$, $(v_{i_j + 1}, \ldots, v_{i_{j+1}})$ is an $\bS_G$-cadet sequence (with $i_0 = 0$, $i_{k+1} = n$). 
\end{remark}

Note that the maximum absolute intercept $m$ is 0 in a graphical arrangement, so it follows from Definition~\ref{boxedtree} that the $\bS_G$-boxed trees are all unary trees (they identify with paths). The following lemma further characterizes the $\bS_G$-boxed trees.  
\begin{lemma}\label{graphcadet}
     A cadet sequence $(v_1, \ldots, v_k)$ is an $\bS_G$-cadet sequence for $\mA_G$ if and only if for all $1 \leq i < j \leq k$, we have $v_i < v_j$ and $\{v_i, v_j\} \notin E$. 
\end{lemma}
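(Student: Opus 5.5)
Since $m=0$, every node of a unary tree has exactly one child position, so $\lsib(v_p)=0$ for every non-root node $v_p$. The plan is therefore to evaluate the sums in Definition~\ref{scadetseqdef} directly: for any cadet sequence $(v_1,\ldots,v_k)$ and any $1\le i<j\le k$ we have $\sum_{p=i+1}^{j}\lsib(v_p)=0$. So $(v_1,\ldots,v_k)$ is an $\bS_G$-cadet sequence if and only if $0\notin S^-_{v_i,v_j}$ for all $1\le i<j\le k$. The remaining work is to decide exactly when $0\in S^-_{a,b}$, for an ordered pair $(a,b)$, in the graphical case.

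We split into two cases according to the order of the labels, using the definitions of $S^-$ with $S_{a,b}$ for $a<b$ as given in the Remark defining $\bS_G$.

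\textbf{Case $v_i<v_j$.} Put $a=v_i$ and $b=v_j$. Then $S^-_{a,b}=\{s\ge 0\mid -s\in S_{a,b}\}$, and this contains $0$ if and only if $0\in S_{a,b}$, that is, if and only if $\{a,b\}\in E$.

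\textbf{Case $v_i>v_j$.} Put $a=v_j<b=v_i$. The relevant set is $S^-_{b,a}=\{s>0\mid s\in S_{a,b}\}\cup\{0\}$, which always contains $0$. So the condition fails automatically.

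Combining the two cases, $0\notin S^-_{v_i,v_j}$ holds if and only if $v_i<v_j$ and $\{v_i,v_j\}\notin E$. Applying this to every pair $i<j$ gives the lemma in both directions.

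The only delicate point is the indexing convention. When $v_i>v_j$, the set $S^-_{v_i,v_j}$ must be read as the second defined set $S^-_{b,a}$ with $b=v_i>a=v_j$. That set always contains $0$ by definition, and this is precisely what forces the labels to increase along each box.
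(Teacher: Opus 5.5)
Your proof is correct and follows the paper's argument: the $\lsib$ sums vanish because the trees are unary, which reduces the condition to $0 \notin S^-_{v_i,v_j}$, and the paper simply asserts that this is equivalent to $v_i<v_j$ and $\{v_i,v_j\}\notin E$. Your two-case check of that equivalence, including the observation that $S^-_{b,a}$ always contains $0$ when $b>a$, makes explicit the step the paper leaves to the reader.
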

\begin{proof}
    As $\bS_G$-boxed trees are unary, we have $\sli_{p = i+1}^j \lsib(v_p) = 0$ for all $1 \leq i < j \leq n$. The result follows by observing that $$0 \notin S_{v_i, v_j}^{-} \iff v_i < v_j \text{ and } \{v_i, v_j\} \notin E. $$
\end{proof}

Next, we show that these $\bSG$-boxed trees are in bijection with certain faces of the braid arrangement.

\begin{definition}
    Let $\mF_{\mB_n}$ be the set of faces of the braid arrangement $\mB_n$. We define $$\mF_G:= \{F \in \mF_{\mB_n} \mid \forall H \in \mA_G, \,\, F \nsubseteq H\},$$
    that is, $\mF_G$ is the set of faces of the braid arrangement that are not contained in any hyperplane of the graphical arrangement $\mA_G$. 
    
    Further, for a region $R$ of $\mA_G$, we define $\mF_R: = \{F \in \mF_G \mid F \subseteq R\}$ as the set of faces of the braid arrangement contained in $R$ but not in any hyperplane of $\mA_G$. 
\end{definition}

The faces of the braid arrangement are in bijection with ordered partitions of $[n]$. This bijection is illustrated in Figure~\ref{fig:BraidFace}. Let $\Pi$ be an ordered partition of $[n]$. Then, the blocks of the partition indicate which coordinates are equal, that is, if $i$ and $j$ are in the same block of $\Pi$, then $x_i = x_j$ for all points $x$ of the face. Further, the relative ordering of the blocks indicates the relative ordering of the coordinates, that is, if $i$ is in a block before $j$, then $x_i < x_j$ for all points $x$ of the face. Given a face $F \in \mF_{\mB_n}$ we denote by $\Pi(F)$ the ordered partition labeling it and given an ordered partition of $[n]$, we denote by $F_{\Pi}$ the face it labels. 
\begin{example}
    For $n = 4$, the ordered partition $\{\{1,3\}, \{4\}, \{2\}\}$ corresponds to the face $\{(x_1, x_2, x_3, x_4) \in \mathbb{R}^4 \mid x_1 = x_3 < x_4 < x_2\}$.
\end{example}

\begin{figure}[h]
    \centering
    \includegraphics[width=0.8\linewidth]{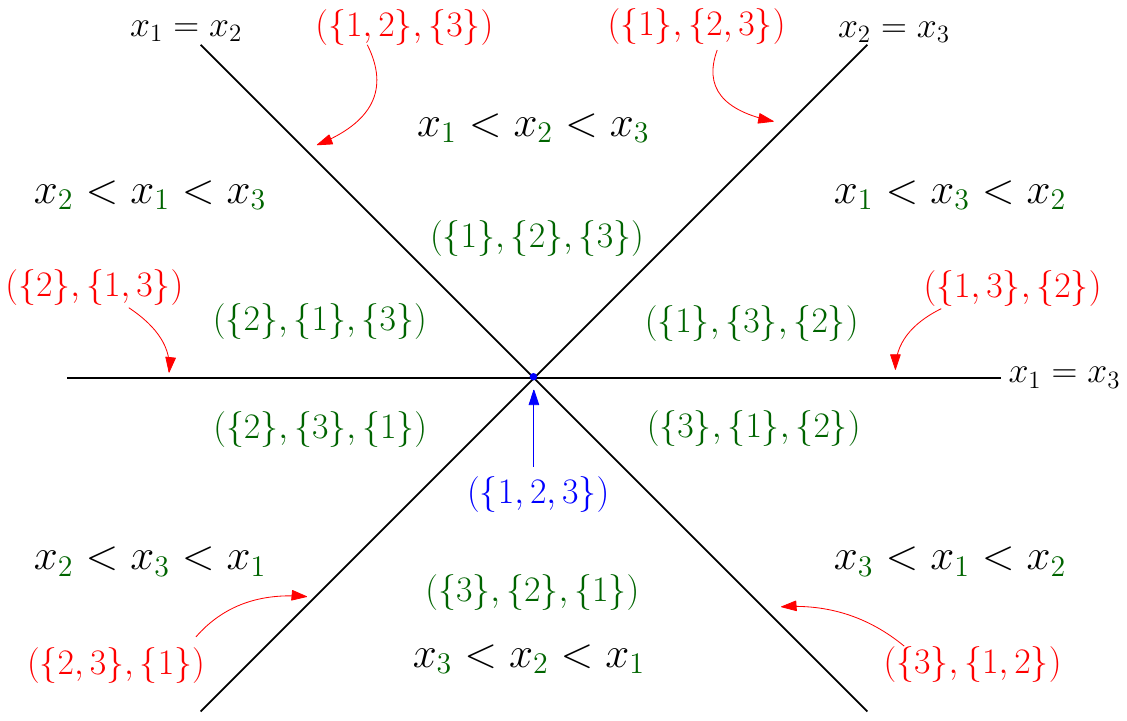}
    \caption{Bijection between faces of $\mB_3$ and ordered partitions of $[3]$.}
    \label{fig:BraidFace}
\end{figure}
\begin{lemma}\label{lem:graph_bijection}
    There is a bijection $\beta_R: \mU_{\bSG}(R) \ra \mF_{R}$ such that for an $\bS$-boxed tree $(T,B) \in \mU_{\textbf{S}}(R)$, $\dim(\beta_R(T,B)) = |B|$.  
\end{lemma}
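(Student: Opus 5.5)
The plan is to build $\beta_R$ explicitly by sending a boxed path to the ordered partition whose blocks are its boxes. First I need the chamber $\Phi(T)$ for a unary tree. By Remark~\ref{rem:GBoxedTrees}, every $T\in\mT^{(0)}(n)$ is a path $T=v_1\ldots v_n$. For $m=0$, Theorem~\ref{CatalanRegionBijection} is a bijection between such paths and the chambers of the braid arrangement $\mB_n$. Unwinding the definition of $\prec_T$ in this degenerate case, $\Phi(T)$ should be the chamber $\{x_{v_1}<x_{v_2}<\cdots<x_{v_n}\}$, or the reverse order. I will fix the orientation by checking it against Bernardi's definition of $\prec_T$. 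The construction below is symmetric, so the other orientation only reverses the order of the blocks.

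\emph{The map.} Let $(T,B)\in\mU_{\bSG}(R)$ with $B=\{(v_1,\ldots,v_{i_1}),\ldots,(v_{i_k+1},\ldots,v_n)\}$ as in Remark~\ref{rem:GBoxedTrees}. Define $\beta_R(T,B)=F_\Pi$, where $\Pi$ is the ordered partition whose blocks are the boxes, taken in order along the path. Then $F_\Pi$ has dimension equal to its number of blocks, namely $|B|$. I then check three things.
\begin{itemize}
\item $F_\Pi\in\mF_G$. Each box is an $\bSG$-cadet sequence, so by Lemma~\ref{graphcadet} no two nodes of the same box are joined by an edge. Hence $F_\Pi\not\subseteq H_{a,b}$ for every $\{a,b\}\in E$: if $a,b$ lie in different blocks, then $x_a\neq x_b$ on $F_\Pi$.
\item $F_\Pi\subseteq R$. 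By construction $F_\Pi$ lies in the closure of the chamber $\Phi(T)$, and $\Phi(T)\subseteq R$. Every hyperplane of $\mA_G$ belongs to $\mB_n$, so a face of $\mB_n$ not contained in $H\in\mA_G$ lies in one open side of $H$. Since $F_\Pi\subseteq\overline{\Phi(T)}$, that side is the same as the side of $\Phi(T)$. This holds for every $H\in\mA_G$, so $F_\Pi\subseteq R$.
\item Consequently $F_\Pi\in\mF_R$.
\end{itemize}

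\emph{The inverse.} Let $F\in\mF_R$ with $\Pi(F)=(B_1,\ldots,B_k)$. List the elements of each block in increasing order and concatenate the blocks in order, giving a path $T$; take $B$ to be the blocks, viewed as cadet sequences. Each block is increasing, and it contains no edge of $G$ because $F\not\subseteq H_{a,b}$ for $a,b$ in the same block. So by Lemma~\ref{graphcadet} each block is an $\bSG$-cadet sequence. The chamber $\Phi(T)$ has $F$ in its closure, and the same side argument as above shows $\Phi(T)$ lies on the same side as $F$ of every $H\in\mA_G$. Hence $\Phi(T)\subseteq R$, that is, $T\in T_R$.

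The two maps are mutually inverse. Lemma~\ref{graphcadet} forces every box to be increasing, so $(T,B)$ is recovered uniquely from the unordered blocks together with their order. The step needing the most care is identifying $\Phi(T)$ for unary trees from Bernardi's order $\prec_T$, to get the direction of the inequalities right. I will fix that orientation first and, if it is the reverse order, reverse the block order in both maps.
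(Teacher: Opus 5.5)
Your proposal is correct and follows essentially the same route as the paper. Boxes become the blocks of an ordered partition, and the inverse sorts each block increasingly via Lemma~\ref{graphcadet}; region compatibility comes from $F$ lying in the closure of $\Phi(T)$, which the paper takes to be $\{x_{v_1}<\cdots<x_{v_n}\}$, so your orientation hedge is harmless. The only cosmetic difference is that the paper first builds a global bijection $\mU_{\bSG}(n)\ra\mF_G$ and then restricts it to each region, whereas you check $F\subseteq R$ and $\Phi(T)\subseteq R$ directly by a half-space argument in both directions.
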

\begin{proof}
    We define $\beta: \mU_{\bS_G}(n) \ra \mF_{G}$ as follows: 

    Let $(T, B) \in \mU_{\bS_G}(n)$. By Remark~\ref{rem:GBoxedTrees}, we can represent this as $T = v_1\ldots v_n$ and $B = \{(v_1, \ldots, v_{i_1}), \ldots, (v_{i_k + 1}, \ldots, v_n)\}$. We define $\Pi_{(T,B)} = (\{v_1, \ldots, v_{i_1}\},  \ldots, \{v_{i_k + 1}, \ldots, v_n\})$. Clearly, this is an ordered partition of $[n]$. We define $\beta(T, B) = F_{\Pi_{(T,B)}}$, that is, the face of $\mB_n$ labeled by the ordered partition $\Pi_{(T,B)}$. 

    Let $(T,B) \in \mU_{\bS_G}(n)$ and let $F:= \beta(T, B)$. By definition, $F \in \mF_{\mB_n}$. We wish to show that $F \in \mF_{G}$. Suppose $F \notin \mF_{G}$. Then, there exists a hyperplane $H = \{x_i - x_j = 0\}$ of $\mA_G$ such that $F \sse H$. Then, by the bijection between $\mF_{\mB_n}$ and ordered partitions of $[n]$, $i$ and $j$ will be in the same block of $\Pi(F) = \Pi_{(T, B)}$. But, by the definition of $\Pi_{(T, B)}$, $i$ and $j$ are in the same block if and only if they are in the same $\bS_G$-cadet sequence in $B$. By Lemma~\ref{graphcadet}, $\{i, j\} \notin E$. But, as $x_i - x_j$ is a hyperplane of $\mA_G$, $\{i, j\} \in E$, which is a contradiction. Hence $F \in \mF_{G}$, and $\beta$ is well defined. 

    Next, we show that $\beta$ is surjective. Let $F \in \mF_G$ and let $\Pi(F) = (D_1, \ldots, D_k)$ be the ordered partition labeling $F$. For $i \in [k]$, let $(v_{i_1}, \ldots, v_{i_{d_i}})$ be the ordering of the elements of $D_i$ in increasing order (where $|D_i| = d_i)$. Then, for $T = v_{1_1}\ldots v_{1_{d_1}} v_{2_1}\ldots v_{2_{d_2}}\ldots v_{k_1}\ldots v_{k_{d_k}}$ and $B = \{(v_{1_1},\ldots ,v_{1_{d_1}}), (v_{2_1},\ldots ,v_{2_{d_2}}),\ldots ,(v_{k_1},\ldots ,v_{k_{d_k}})\}$, we have $\beta(T, B) = F$. It remains to show that $(T, B) \in \mU_{\bS_G}(n)$. Suppose $(T, B) \notin \mU_{\bS_G}(n)$. Then, $\exists i \in [k]$ such that $(v_{i_1}, \ldots, v_{i_{d_i}})$ is not an $\bS_G$-cadet sequence. Note that by construction of $B$, the elements in this cadet sequence are in increasing order. Hence, by Lemma~\ref{graphcadet}, we have $\{v_{i_t}, v_{i_s}\} \in E$ for some $t, s \in [d_i]$. Now, as $v_{i_t}$ and $v_{i_s}$ are in the same block of $\Pi(F)$, $x_{v_{i_t}} = x_{v_{i_s}}$ for all points $x$ in $F$. Hence $F$ lies on the hyperplane $x_{v_{i_t}} - x_{v_{i_s}} = 0$ which is in $\mA_G$ as $\{v_{i_t}, v_{i_s}\} \in E$. This contradicts $F \in \mF_G$. Hence, $(T, B) \in \mU_{\bS_G}(n)$ such that $\beta(T, B) = F$. So $\beta$ is surjective. Further, as $(T, B)$ was uniquely determined from $\Pi(F)$ and hence from $F$, we have $\beta$ is injective. 

    Hence $\beta: \mU_{\bS_G}(n) \ra \mF_G$ is a bijection. Next, we show that for a region $R$ of $\mA_G$, $\beta_R := \beta|_{\mU_{\bS_G}(R)}$ is a bijection between $\mU_{\bS_G}(R)$ and $\mF_R$. This amounts to showing that $\forall (T, B) \in \mU_{\bS_G}(n)$, $\beta(T, B) \sse \Phi(T)$. 

    Let $(T, B) \in \mU_{\bS_G}(n)$ with $T = v_1\ldots v_n$ and $B = \{(v_1, \ldots, v_{i_1}), \ldots, (v_{i_k + 1}, \ldots, v_n)\}$. Let $F = \beta(T,B)$. We first show that $\Phi(T)$ is incident to $F$. 
    
    Now, by definition of $\Phi$, $x_{v_1} < \ldots < x_{v_n}$ for all $x \in \Phi(T)$. Further, by definition of $\beta$, $\Pi(F) = \Pi_{(T, B)} = (\{v_1, \ldots, v_{i_1}\},  \ldots, \{v_{i_k + 1}, \ldots, v_n\})$. Hence the weak inequalities that hold in $F$ hold in the closure of $\Phi(T)$. $F$ is contained in the closure of $\Phi(T)$. 

    Now, let $R$ be a region of $\mA_G$, and let $(T, B) \in \mU_{\bS_G}(R)$. Let $F = \beta(T, B)$. Then $F$ is in the interior of a region $R'$ of $\mA_G$, as if not, it would lie on a hyperplane of $\mA_G$, which contradicts $F \in \mF_G$. But, $F$ is contained in the closure of $\Phi(T)$, so $\Phi(T) \sse R'$. Hence $T \in T_{R'}$. As $(T,B) \in \mU_{\bS_G}(R)$, we have $T \in T_R$. Further, as $T_R \cap T_{R'} = \emptyset$ if $R \neq R'$, we have $R = R'$.  

    Hence $\beta_R := \beta\big|_{\mU_{\bS_G}(R)}: \mU_{\bS_G}(R) \ra \mF_R$ is a bijection. 

    Finally, it is easy to show that the dimension of a face of $\mB_n$ is equal to the number of blocks of the ordered partition labeling it. Further, by definition of $\beta$, the ordered partition labeling $\beta(T,B)$ has $|B|$ blocks. Hence, 
    $$\dim(\beta(T,B)) = |B|.$$ 
\end{proof}

\begin{corollary}[Theorem~\ref{generalcase} for Graphical Arrangements]\label{GAResult}
Let $G = ([n], E)$ be a graph and let $\mA_G$ be the corresponding graphical arrangement. Let $R$ be a region of $\mA_G$. Then, $$\sum_{(T,B) \in \mU_{\bS_G}(R)} (-1)^{n-|B|} = 1.$$
\end{corollary}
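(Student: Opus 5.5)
Using the bijection $\beta_R$ of Lemma~\ref{lem:graph_bijection}, we rewrite the signed sum over boxed trees as a signed sum over faces. Since $\dim(\beta_R(T,B)) = |B|$, we get
\[
\sum_{(T,B) \in \mU_{\bS_G}(R)} (-1)^{n-|B|} \;=\; (-1)^n \sum_{F \in \mF_R} (-1)^{\dim F}.
\]
It therefore suffices to show that $\sum_{F\in\mF_R}(-1)^{\dim F} = (-1)^n$.

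The key observation is that the faces in $\mF_R$ are exactly the faces of $\mB_n$ contained in the open region $R$. Indeed, a face $F$ of $\mB_n$ is relatively open and refines $\mA_G$, since every hyperplane of $\mA_G$ is a hyperplane of $\mB_n$. Hence $F$ is either contained in some $H\in\mA_G$ or disjoint from all of them. In the latter case $F$, being connected, lies in a single region of $\mA_G$. Consequently the faces in $\mF_R$ partition $R$ into relatively open cells, i.e. the restriction to $R$ of the polyhedral decomposition of $\R^n$ by $\mB_n$.

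We then compute the Euler characteristic of $R$ in a form compatible with open cells. The cleanest choice is the compactly supported Euler characteristic (the additive one, equivalently the combinatorial "valuation" Euler characteristic). Each relatively open polyhedral cell of dimension $d$ is homeomorphic to $\R^d$, and $\chi_c(\R^d) = (-1)^d$. Additivity over the partition gives $\sum_{F\in\mF_R}(-1)^{\dim F} = \chi_c(R)$.

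Since $R$ is an open convex polyhedron of dimension $n$, it is homeomorphic to $\R^n$. Hence $\chi_c(R) = (-1)^n$, which gives total contribution $(-1)^n\cdot(-1)^n = 1$.

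Alternatively, one can avoid $\chi_c$ with a combinatorial argument. Quotient by the lineality direction $(1,\dots,1)$ and intersect with a large ball. One could instead use the standard fact that for an open polyhedron decomposed into relatively open polyhedral cells, $\sum (-1)^{\dim}$ equals $(-1)^n$. This follows from Euler's relation for the closure of $R$ minus its boundary, where the closure has Euler characteristic $1$ and the boundary is a sphere or a contractible set depending on boundedness.

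The main obstacle is this Euler characteristic step. The paper's phrase "the Euler characteristic of a region is $1$" refers to the ordinary Euler characteristic, but a signed count of open cells naturally yields $\chi_c$, and the sign $(-1)^n$ must be tracked carefully. Unbounded regions, and the fact that every face of $\mB_n$ contains the line $\R(1,\dots,1)$, also need care. I would handle both by working in $\R^n/\R(1,\dots,1)$, where all dimensions drop by one and both sides change sign together. I would then invoke additivity of $\chi_c$ over the cell decomposition, as above.
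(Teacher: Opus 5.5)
Your proposal is correct and follows the paper's proof: you push the sum through $\beta_R$ to the faces in $\mF_R$ and evaluate an Euler characteristic of $R$. Your explicit use of $\chi_c$, with $\chi_c(R)=(-1)^n$ cancelling the prefactor $(-1)^n$, is exactly the bookkeeping hidden in the paper's ``the region is contractible, so this equals $1$.''

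Drop the closure-minus-boundary alternative, however. Every region of $\mA_G$ is an unbounded cone, and with ordinary Euler characteristics ($\chi(\overline{R})=1$, and $\chi(\partial R)=1$ for a contractible boundary) that argument yields $0$ rather than $(-1)^n$. The reason is that the alternating face count of an unbounded closed polyhedron is $\chi_c=0$, not $1$. For the same reason, the valuation normalized to $1$ on closed convex polyhedra is not the same as $\chi_c$.
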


\begin{proof}
By the bijection established in Lemma \ref{lem:graph_bijection}, for any region $R$ of $\mathcal A_G$, we have
$$\sum_{(T,B) \in \mU_{\bS_G}(R)} (-1)^{n-|B|} = \sum_{F \in \mathcal F_R} (-1)^{n-\dim(F)}.$$
Since the faces in $\mathcal F_R$ partition $R$, the Euler characteristic of the region is $\sli_{F \in \mathcal F_R} (-1)^{n-\dim(F)}$. As the region is contractible, this equals 1.
\end{proof}
\smallskip

\subsection{General Case}\label{GenSec}\hfill\\

We now prove Theorem~\ref{generalcase}, that is, that Equation~\eqref{eqn:bernardi_region} holds for general deformations of the braid arrangement. 

Our proof follows similar lines to that of Corollary~\ref{GAResult}. For $m = \max\{|s| \mid s \in S_{a, b}, 1\leq a < b \leq n\}$, we show that the $\bS$-boxed trees are in bijection with certain faces of the $m$-Catalan arrangement, and then use the Euler characteristic to find the exact value of the signed sum. 

We first recall the bijection of the faces of the $m$-Catalan arrangement with some decorated plane trees as given in~\cite{bernardi2025bijectionsfacesbraidtypearrangements, Dlev}.

\begin{figure}[h]
    \centering
    \includegraphics[width=0.45\linewidth]{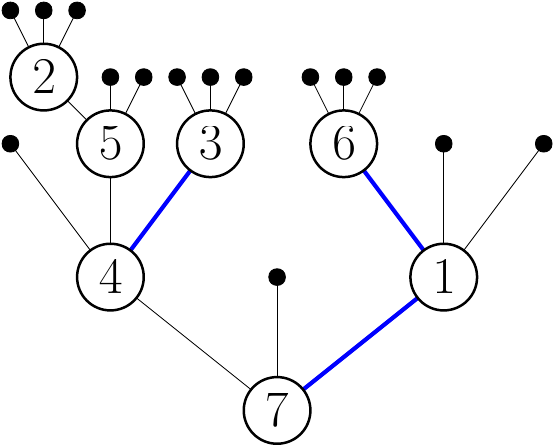}
    \caption{An example of a marked $(2, 7)$-tree.}
    \label{fig:marked_tree}
\end{figure}

\begin{definition}
    A \emph{marked $(m,n)$-tree} is a pair $(T, \mu)$ where $T \in \mT^{(m)}(n)$ and $\mu$ is a set of cadet edges of $T$ such that if an edge $e \in \mu$ is of the form $e = \{j, \schild{0}{j}\}$, then $j < \schild{0}{j}$. We refer to the edges in $\mu$ as the \emph{marked edges}. Further, we write $i \sim_{\mu} j$ if there is a path between $i$ and $j$ such that all the edges are in $\mu$. 

    We denote by $\overline{\mT^{(m)}(n)}$ the set of marked $(m,n)$-trees. An example of a marked $(2,7)$-tree is shown in Figure~\ref{fig:marked_tree}.
\end{definition}

\begin{theorem}[{\cites[Theorem 3.8]{bernardi2025bijectionsfacesbraidtypearrangements}[Theorem 14]{Dlev}}]\label{CatalanFacesBij}
    The faces of the $m$-Catalan arrangement are in bijection with the trees in $\overline{\mT^{(m)}(n)}$, where the bijection is given by: 
    \begin{align*}
        \Psi(T, \mu) := 
\left(\bigcap_{\substack{\{i,j\} \in \mu \\ i = \schild{s}{j} }} \{x_i - x_j = s\} \right) 
&\cap 
\left(\bigcap_{\substack{(i,j,s) \in \text{Triple}_n^m \\ i \not\sim_\mu j,\, i \prec_T \schild{s}{j}}} \{x_i - x_j < s\} \right) \\
&\cap 
\left(\bigcap_{\substack{(i,j,s) \in \text{Triple}_n^m \\ i \not\sim_\mu j,\, i \succeq_T \schild{s}{j}}} \{x_i - x_j > s\}\right),
    \end{align*}
    where $Triple_n^m = \{(i,j,s) \mid i, j \in [n], i \neq j,  \, s \in [0..m] \text{ such that } (s > 0 \text{ or } i>j)\}. $
\end{theorem}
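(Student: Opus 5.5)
The plan is to build $\Psi$ on top of the region bijection $\Phi$ of Theorem~\ref{CatalanRegionBijection}, by showing that the faces of the $m$-Catalan arrangement lying in the closure of a region $\Phi(T)$ correspond to sets of cadet edges of $T$. I would proceed in three steps.

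\textbf{Step 1: $\Psi(T,\mu)$ is a nonempty face of $\overline{\Phi(T)}$.} Fix $T\in\mT^{(m)}(n)$ and recall Bernardi's total order $\prec_T$.
\begin{enumerate}
\item I would first check that every cadet edge $\{i,j\}$ with $i=\schild{s}{j}$ gives a facet of $\Phi(T)$ supported on $x_i-x_j=s$. Concretely, $i$ and $\schild{s}{j}$ should be adjacent in $\prec_T$ in the relevant sense, so that only this one inequality flips when we cross the wall.
\item Next I would show that for any set $\mu$ of cadet edges these facets meet in a face of dimension $n-|\mu|$. To do this, I would exhibit an explicit point. Give each class of $\sim_\mu$ (a union of cadet-edge paths, hence a cadet sequence) a common base coordinate. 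Then set $x_i=x_j+s$ along every marked edge. Finally, separate distinct classes generically, in the order dictated by $\prec_T$, so that every inequality between non-equivalent indices is strict with the sign prescribed by $\Phi(T)$.
\item The extra condition $j<\schild{0}{j}$ for marked $0$-edges must then be explained. The hyperplane $x_i=x_j$ is counted in $\text{Triple}_n^m$ only when $i>j$. I expect this condition to select the one orientation in which the face is attributed to this particular $T$ rather than to a neighbouring tree with the two nodes swapped.
\end{enumerate}

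\textbf{Step 2: surjectivity.} Given a face $F$, pick a canonical region adjacent to it. Choose a point $x\in F$ and perturb it to $x+\varepsilon w$ with a fixed generic $w$, say $w_i=\delta^{i}$ for $0<\delta\ll 1$. This determines which side of each hyperplane containing $F$ the perturbed point lies on. The resulting region equals $\Phi(T)$ for a unique $T$. I then need to show three things:
\begin{enumerate}
\item every equality $x_i-x_j=s$ holding on $F$ is generated by equalities along cadet edges of $T$;
\item those edges, which form $\mu$, satisfy $j<\schild{0}{j}$ when $s=0$;
\item $F=\Psi(T,\mu)$.
\end{enumerate}

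\textbf{Step 3: injectivity.} Conversely, I must show that if $F=\Psi(T,\mu)$, then the perturbation of Step 2 lands exactly in $\Phi(T)$. The marked-edge convention is what makes the canonical choice consistent. Injectivity of $\Phi$ then recovers $T$, and $\mu$ is determined by which cadet edges give equalities on $F$.

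The main obstacle is the first item of Step 2: proving that the equality pattern of an arbitrary face is realised by cadet edges of the canonically chosen tree. This requires unwinding the definition of $\prec_T$. My first attempt would be induction on $n$, removing the last node in $\prec_T$: it is a leaf-like node whose cadet parent relation is visible from the coordinates. Alternatively, I would induct on the number of equalities, showing that collapsing one cadet edge of $T$ corresponds to restricting to one facet.
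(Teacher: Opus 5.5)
This theorem is not proved in the paper; it is imported from the cited works, so there is no internal argument to compare with. Your architecture is reasonable: attach to each face a canonical adjacent region, then identify the faces attached to $\Phi(T)$ with sets of cadet edges. However, the canonical rule you chose in Step 2 is wrong, and with it Steps 2 and 3 are false, not merely hard.

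Take $n=2$, $m=1$, so $\text{Triple}_2^1=\{(2,1,0),(1,2,1),(2,1,1)\}$.
\begin{itemize}
\item For the tree $T$ with root $1$ and $\schild{1}{1}=2$, the triple $(2,1,1)$ has $2\succeq_T\schild{1}{1}$. Hence $\Phi(T)=\{x_2-x_1>1\}$, and marking its cadet edge gives $\Psi(T,\mu)=\{x_2-x_1=1\}$.
\item Symmetrically, for $T'$ with root $2$ and $\schild{1}{2}=1$, one gets $\Phi(T')=\{x_1-x_2>1\}$ and the face $\{x_1-x_2=1\}$.
\end{itemize}
Your Step 3 would need $w_2>w_1$ for the first face and $w_1>w_2$ for the second, which is impossible. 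Concretely, with $w_i=\delta^i$ the face $\{x_2-x_1=1\}$ is pushed into $\{0<x_2-x_1<1\}=\Phi(T_0)$, where $T_0$ is the tree with $\schild{0}{1}=2$. The only cadet edge of $T_0$ produces $x_1=x_2$, so Step 2(a) fails for your canonical tree. Even for $m=0$ your decreasing $w$ has the wrong orientation: the condition $j<\schild{0}{j}$ means the larger label receives the larger coordinate.

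The underlying reason is that a translation sends a point of $\{x_a-x_b=s\}$ to the side given by the sign of $w_a-w_b$, independently of $s$. By contrast, $\Psi$ always attributes a face to the region on the side $x_a-x_b>s$ of every hyperplane $\{x_a-x_b=s\}$, $(a,b,s)\in\text{Triple}_n^m$, that contains it. That is the side away from the alcove $A_0=\{x_n<x_{n-1}<\cdots<x_1<x_n+1\}$; this is the dichotomy behind $\mH^{\text{nsep}}(T)$ in Section~\ref{sec:contribution}. The marking condition for $0$-edges is just the $s=0$ case of this rule, not a separate tie-breaking convention.

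The correct canonical rule is radial: perturb $x\in F$ to $x+\varepsilon(x-p)$ with $p\in A_0$. On $\{x_a-x_b=s\}$ this gives $s+\varepsilon\bigl(s-(p_a-p_b)\bigr)>s$, because $p_a-p_b<s$ for every triple. With this rule, injectivity becomes a short computation once Step 1 is available. Take a class $u_1,\ldots,u_r$ of $\sim_\mu$ with $u_{\ell+1}=\schild{s_\ell}{u_\ell}$. Every marked edge satisfies $x_{u_{\ell+1}}-x_{u_\ell}>s_\ell$ on $\Phi(T)$, since $u_{\ell+1}\succeq_T\schild{s_\ell}{u_\ell}$. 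For $s_\ell=0$, the marking condition is exactly what makes $(u_{\ell+1},u_\ell,0)$ a triple. Summing shows that $\Phi(T)$ lies on the far side of each hyperplane $x_{u_q}-x_{u_p}=s_p+\cdots+s_{q-1}$ through $\Psi(T,\mu)$.

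What remains is the actual content about $\prec_T$, which your proposal does not supply:
\begin{itemize}
\item $\Psi(T,\mu)$ is nonempty, has dimension $n-|\mu|$, and is contained in $\overline{\Phi(T)}$. When $s_p+\cdots+s_{q-1}<m$, this already forces $x_{u_q}-x_{u_p}<s_p+\cdots+s_{q-1}+1$ on $\Phi(T)$, which is not a genericity statement.
\item The equalities of every face are generated by cadet edges of its canonical tree.
\end{itemize}
``Separate the classes generically in the order dictated by $\prec_T$'' and ``induct on $n$'' are placeholders for these arguments, not proofs of them.
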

Figure~\ref{fig:CatFace} illustrates the bijection for the $1$-Catalan arrangement.

\begin{remark}
    Note that by the bijection $\Psi$ defined above, for a marked tree $(T, \mu)$, if the edge $\{i, j\}$ is marked with $i = \schild{s}{j}$, then the face $\Psi(T,\mu)$ lies on the hyperplane $\{x_j + s = x_i\}$. 
\end{remark}

\begin{remark}
    For a face $F$ of the $m$-Catalan arrangement, let $(T, \mu) \in \overline{\mT^{(m)}(n)}$ be such that $\Psi(T, \mu) = F$. Then, it is clear from the definitions of $\Phi$ and $\Psi$ that the region of the $m$-Catalan arrangement given by $\Phi(T)$ is incident to $F$. 
\end{remark}

\begin{figure}
    \centering
    \includegraphics[width=\linewidth]{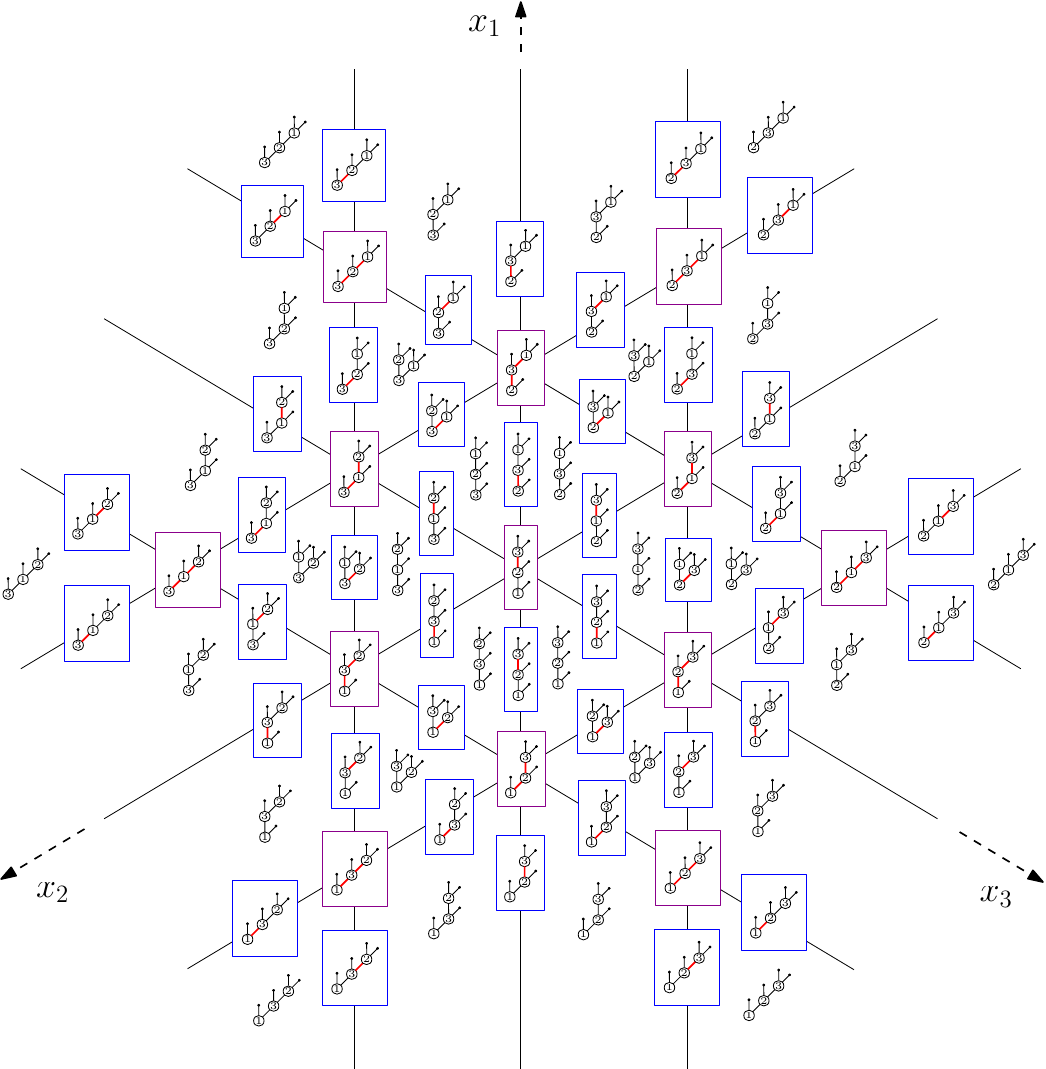}
    \caption{The 1-Catalan arrangement and the marked trees corresponding to each face. In the trees, the presence of a marked edge between nodes 1 and 2, 1 and 3, or 2 and 3 indicates that the corresponding face lies on a hyperplane involving the variables $x_1$ and $x_2$, $x_1$ and $x_3$, or $x_2$ and $x_3$, respectively.}
    \label{fig:CatFace}
\end{figure}

We now prove that Equation~\eqref{eqn:bernardi_region} holds for general deformations of the braid arrangement. 

\begin{definition}
Let $\mathcal{F}_n^m$ denote the set of faces of the $n$-dirmensional $m$-Catalan arrangement. For a given $\bS$-braid arrangement $\mathcal{A}_{\bS}$, define:
\[
\mathcal{F}_{\bS} := \{ F \in \mathcal{F}_n^m \mid \forall H \in \mA_{\bS}, \, F \not\subseteq H  \},
\]
that is, $\mathcal{F}_{\bS}$ is the set of faces of the $m$-Catalan arrangement that are not contained in any hyperplane of the $\bS$-braid arrangement.

Further, for a region $R$ of $\mA_{\bS}$, we define $\mF_{\bS}(R) := \{F \in \mF_{\bS} \mid F \subseteq R\}$.
\end{definition}

\begin{definition} Let $T$ be a tree in $\mT^{(m)}(n)$. We say $B$ is a \emph{boxing} of $T$ if $B$ is a set of cadet sequences partitioning the nodes of $T$ and furthermore, if an edge of the form $e = \{j, \schild{0}{j}\}$ is in a cadet sequence in $B$, then $j < \schild{0}{j}$.
    
We further define $\mU^{(m)}(n) := \{(T, B) \mid T \in \mT^{(m)}(n), B \text{ is a boxing of } T\}$.
\end{definition}

\begin{lemma}\label{lem:bijection_general}
There is a bijection $\beta_R: \mathcal{U}_{\bS}(R) \ra \mF_{\bS}(R)$ such that $\dim(\beta_R(T,B)) = |B|$. 
\end{lemma}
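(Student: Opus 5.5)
We will mirror the proof of Lemma~\ref{lem:graph_bijection}, with the bijection $\Psi$ of Theorem~\ref{CatalanFacesBij} replacing the ordered-partition labelling of faces of $\mB_n$. The first step is to note that boxings of $T$ and markings of $T$ correspond. Given a boxing $B$ of $T$, let $\mu_B$ be the set of edges $\{v_i, v_{i+1}\}$ with $v_i, v_{i+1}$ consecutive in a common box of $B$. Each such edge is a cadet edge, and the condition on $\schild{0}{j}$ in the definition of a boxing matches the condition in the definition of a marked tree. Conversely, the connected components of a set $\mu$ of cadet edges are paths going down cadet chains, hence cadet sequences. So $B \mapsto (T,\mu_B)$ is a bijection $\mU^{(m)}(n) \ra \overline{\mT^{(m)}(n)}$, and the boxes of $B$ are exactly the $\sim_{\mu_B}$-classes. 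We define $\beta(T,B) := \Psi(T,\mu_B)$. Its dimension is $n - |\mu_B| = |B|$, because the equalities $x_i - x_j = s$ for the marked edges form a forest of independent linear conditions and the remaining conditions are strict inequalities. This settles the dimension claim.

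The second and key step is to show that, under this bijection, $B$ is an $\bS$-boxing (every box is an $\bS$-cadet sequence) if and only if $\Psi(T,\mu_B)$ lies on no hyperplane of $\mA_{\bS}$. On the face $F = \Psi(T,\mu_B)$, the only affine relations $x_a - x_b = \text{const}$ that hold identically are those where $a \sim_{\mu} b$. For $a\not\sim_\mu b$, the difference $x_a - x_b$ is not constant on $F$, since $F$ is relatively open of the stated dimension and the strict inequalities leave the blocks free to move relative to each other. If $v_i, v_j$ lie in one box $(v_1,\dots,v_k)$ with $i<j$, then summing the marked-edge equations $x_{v_{p}} = x_{v_{p-1}} + \lsib(v_p)$ shows that $F$ lies on $x_{v_j} - x_{v_i} = \sum_{p=i+1}^j \lsib(v_p)$. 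The child index of $v_p$ equals $\lsib(v_p)$, because in an $(m+1)$-ary plane tree the leaves count as siblings.

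We then check, using the definitions of $S^-_{a,b}$ and $S^-_{b,a}$, that this hyperplane belongs to $\mA_{\bS}$ exactly when $\sum_{p=i+1}^j \lsib(v_p) \in S^-_{v_i,v_j}$. This requires a careful sign case analysis on whether $v_i<v_j$ or $v_i>v_j$, including the special role of $0$: $0$ is always in $S^-_{b,a}$ for $a<b$, which forbids a descending pair with zero offset, consistent with the boxing condition on $0$-children. I expect this sign bookkeeping to be the main obstacle. It is also where non-transitivity is harmless, because each pair $(v_i,v_j)$ in a box is tested directly, not only consecutive pairs. The outcome is that $\beta$ restricts to a bijection $\mU_{\bS}(n) \ra \mF_{\bS}$.

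Finally, we restrict to regions exactly as in the graphical case. By the remark following Theorem~\ref{CatalanFacesBij}, the region $\Phi(T)$ of the $m$-Catalan arrangement is incident to $F = \beta(T,B)$, so $F \sse \overline{\Phi(T)}$. Since $F \in \mF_{\bS}$, $F$ lies in the interior of a unique region $R'$ of $\mA_{\bS}$, and therefore $\Phi(T) \sse R'$. Because the sets $T_R$ are disjoint, $T \in T_R$ forces $R' = R$, so $F \in \mF_{\bS}(R)$. Conversely, a face $F \in \mF_{\bS}(R)$ comes from some $(T,B)$ with $\Phi(T)$ incident to $F$, and by the same argument $\Phi(T) \sse R$. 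Hence $\beta_R := \beta|_{\mU_{\bS}(R)}$ is the desired bijection onto $\mF_{\bS}(R)$ with $\dim \beta_R(T,B) = |B|$.
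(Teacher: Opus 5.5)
Your proposal is correct and follows the paper's proof. Both use the same boxing-to-marking bijection $(T,B)\mapsto(T,\mu_B)$ composed with $\Psi$, the same pairwise comparison of path sums against $S^-_{v_i,v_j}$ (with the zero-offset case settled by the $0$-child condition), and the same incidence argument with $\Phi(T)$ to restrict to a region. Your relative-openness argument, showing that $x_a-x_b$ is non-constant on $F$ when $a\not\sim_\mu b$, and your explicit forest-independence count for $\dim F$ justify steps the paper only asserts by appeal to Theorem~\ref{CatalanFacesBij}.
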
 
\begin{proof} 
    We define $\alpha : \mU^{(m)}(n) \ra \overline{\mT^{(m)}(n)}$ as follows: 
    
    For $(T, B) \in \mU^{(m)}(n)$ a boxed tree, we define $\mu_B$ to be the set of edges of $T$ such that both endpoints are in the same box of $B$. Then, as boxes are cadet sequences, $\mu_B$ is a set of cadet edges. Further, if we have $e = \{u, v\} \in \mu_{B}$ with $v = \schild{0}{u}$, as $e$ is contained in a box, we have $v < \schild{0}{u}$. Hence $(T, \mu_B) \in \overline{\mT^{(m)}(n)}$. Clearly, $\al$ is a bijection.

    Then, $\beta_m := \Psi \circ \al : \mU^{(m)}(n) \ra \mF^m_n$ is a bijection by Theorem~\ref{CatalanFacesBij}. Let $\beta_{\bS} := \beta_m \big|_{\mU_{\bS}(n)}$. Then $\beta_{\bS}$ is injective. 

    \begin{adjustwidth}{2em}{0pt}
        \textbf{Claim:} $\beta_{\bS}(\mU_{\bS}(n)) = \mF_{\bS}$.

        Suppose $F \notin \mF_{\bS}$. Then, $F$ lies on a hyperplane $\{x_i - x_j = s\} \in \mA_{\bS}$. By the bijection in Theorem~\ref{CatalanFacesBij}, for $(T, \mu) = \Psi^{-1}(F)$, we have a path of marked edges $jv_1\ldots v_k i$ joining $i$ and $j$, with $v_{\ell} = \schild{s_{\ell}}{v_{\ell - 1}}$ for $\ell \in [k+1]$ (taking $v_0 = i$ and $v_{k+1} = j$) such that $\sli_{\ell = 1}^k s_{\ell} = s$. Then, for $(T, \beta_{\mu}) = \al^{-1}(T, \mu)$, we have $i$ and $j$ are in the same box. But, the cadet sequence containing $i$ and $j$ will contain $(j, v_1, \ldots, v_k, i)$ which is not an $\bS$-cadet sequence as $s \in S_{i,j}^-$. Hence $(T, \beta_{\mu}) \notin \mU_{\bS}(n)$.

        Let $F \in \mF_{\bS}$, let $(T, \mu) = \Psi^{-1}(F)$ and let $(T, B_{\mu}) = \al^{-1}(T, B)$. We wish to show that $(T, B_{\mu})$ is an $\bS$-boxed tree. Let $(v_1, \ldots, v_k) \in B_{\mu}$, and let $v_{\ell+1} = \schild{s_\ell}{v_\ell}$ for $\ell \in [k-1]$. 

        Now, let $1 \leq i < j \leq n$. To show that $(v_1, \ldots, v_k)$ is an $\bS$-cadet sequence, we need to show that $\sli_{\ell = i}^{j-1} \lsib(v_{\ell + 1}) =\sli_{\ell = i}^{j-1} s_{\ell} \notin S_{v_i, v_j}^-$. Note that for $\ell \in [i; j-1]$, the edge $\{v_{\ell}, v_{\ell + 1}\}$ is a marked edge. Hence $F$ lies on the hyperplane $\{x_{v_{\ell + 1}} - x_{v_{\ell}} = s_{\ell}\}.$ As $F \in \mF_{\bS}$, we must have $\{x_{v_{\ell + 1}} - x_{v_{\ell}} = s_{\ell}\} \notin \mA_{\bS}$. Hence, $\bigcap\limits_{\ell = i}^{j-1} \{x_{v_{\ell + 1}} - x_{v_{\ell}} = s_{\ell}\} = \left\{x_j - x_i = \sli_{\ell = i }^{j-1} s_{\ell}\right\} \notin \mA_{\bS}$. 

        Clearly, if $\sli_{\ell = i}^{j-1} s_{\ell} \neq 0$, we have $\sli_{\ell = i}^{j-1} s_{\ell} \notin S_{v_i, v_j}^-$. Further, if $\sli_{\ell = i}^{j-1} s_{\ell} = 0$, as all the edges are marked, we have $v_i < \ldots < v_j$ and hence $\sli_{\ell = i}^{j-1} s_{\ell} \notin S_{v_i, v_j}^-$. 

        Hence $(v_1, \ldots, v_k)$ is an $\bS$-cadet sequence, and therefore $(T, B_{\mu})$ is an $\bS$-boxed tree. 

        Hence our claim holds. 
    \end{adjustwidth}

     Now, let $R$ be a region of $\mA_G$, and let $(T, B) \in \mU_{\bS}(R)$. Let $F = \beta(T, B)$. Then $F$ is in the interior of a region $R'$ of $\mA_G$, as if not, it would lie on a hyperplane of $\mA_G$, which contradicts $F \in \mF_{\bS}$. But, $F$ is contained in the closure of $\Phi(T)$, so $\Phi(T) \sse R'$. Hence $T \in T_{R'}$. As $(T,B) \in \mU_{\bS}(R)$, we have $T \in T_R$. Further, as $T_R \cap T_{R'} = \emptyset$ if $R \neq R'$, we have $R = R'$.  

    Hence, $\beta_R = \beta \big|_{\mU_{\bS}(R)} : \mU_{\bS}(R) \ra \mF_{R}$ is a bijection.  

    Finally, for a marked tree $(T, \mu)$, the dimension of $\Psi(T, \mu)$ is the number of $\mu$-connected components of $T$, which by definition of $\al$ is the number of boxes in $B$. Hence, $\dim(\beta(T, B)) = |B|$. 
\end{proof}

Theorem~\ref{generalcase} is then a consequence of Lemma~\ref{lem:bijection_general} together with an Euler characteristic argument. 

\begin{proof}[Proof of Theorem~\ref{generalcase}]
By Lemma~\ref{lem:bijection_general}, the boxed trees $\mathcal{U}_{\bS}(R)$ are in bijection with faces $\mathcal{F}_{\bS}(R)$ of the $m$-Catalan arrangement that lie in the region $R$ and avoid all hyperplanes in $\mathcal{A}_{\bS}$. The dimension of a face equals the number of boxes in the corresponding boxed tree, so:
\[
\sum_{(T,B) \in \mathcal{U}_{\bS}(R)} (-1)^{n - |B|} = \sum_{F \in \mathcal{F}_{\bS}(R)} (-1)^{n - \dim(F)}.
\]
Since the faces in $\mathcal{F}_{\bS}(R)$ partition the region $R$, the Euler characteristic of the region is $\sum\limits_{F \in \mathcal{F}_{\bS}(R)} (-1)^{n - \dim(F)}$. As the region is contractible, this equals $1$. 
\end{proof}

\section{The Contribution of a Tree} \label{sec:contribution}

In the previous sections, we considered the restriction of the Bernardi formula to a region and called it the \emph{contribution of a region}. We can similarly restrict the Bernardi formula to a tree and consider the \emph{contribution of a tree}. Bisain and Hanson~\cite{Bisain_2021} showed that the contribution of a tree is either $0$ or $\pm 1$ and further gave an algorithm to compute it. In this section, we use our new geometric insight to further understand the contribution of a tree. 

\begin{definition}
    Let $T \in \mT^{(m)}(n)$. We define the \emph{contribution of the tree $T$}, denoted $w(T)$ by 
    $$w(T) = \sum_{B \in \mU_{\bS}(T)} (-1)^{n-|B|},$$
    where $\mU_S(T)$ is the set of $\bS$-boxings of $T$.
\end{definition}

Let $\mF_T = \beta(\{(T, B) \mid B \in \mU_{\bS}(T)\})$. Then, by Lemma~\ref{lem:bijection_general}, $w(T) = \sum\limits_{B \in \mU_{\bS}(T)} (-1)^{n-|B|} = \sli_{F \in \mF_T} (-1)^{n -\dim(F)}$. To better understand the contribution of a tree, we first characterize the faces that lie in $\mF_T$. 

\begin{definition}
    Let $R$ be a region of a hyperplane arrangement $\mA_{\bS}$. We say a hyperplane $H_{a,b,s}$ of the $m$-Catalan arrangement is
    \begin{itemize}
        \item \emph{face-supporting} if $H_{a,b,s}$ has a non-empty intersection with the closure of $R$, and
        \item \emph{facet-supporting} if the intersection of $H_{a,b,s}$ with the closure of $R$ has dimension $n-1$. 
    \end{itemize}
    We further denote the set of face-supporting hyperplanes of $R$ by $\mH_R$.  
\end{definition}

\begin{definition}
    Let $T \in \mT^{(m)}(n)$ and $R_T = \Phi(T)$ be the corresponding region of the $m$-Catalan arrangement. We define the set $\mH^{\text{nsep}}(T)$ to be the collection of the face-supporting hyperplanes $H_{a,b,s} \in \mH_{R_T}$ such that 
    \begin{itemize}
        \item If $s > 0$, we have $x_a - x_b < s$ for all $x \in R_T$
        \item If $s = 0$ and $a<b$, $x_a - x_b > 0$ for all $x \in R_T$. 
    \end{itemize}
    and the set $\mH^{\mA}(T):= \{H \in \mH_{R_T} \mid H \in \mA_{\bS}\}$. 
    
    Further, let $\mH_{\bS}(T) := \mH^{\text{nsep}}(T) \cup \mH^{\mA}(T)$.
\end{definition}

\begin{figure}[ht]
        \centering
        \includegraphics[width=0.7\linewidth]{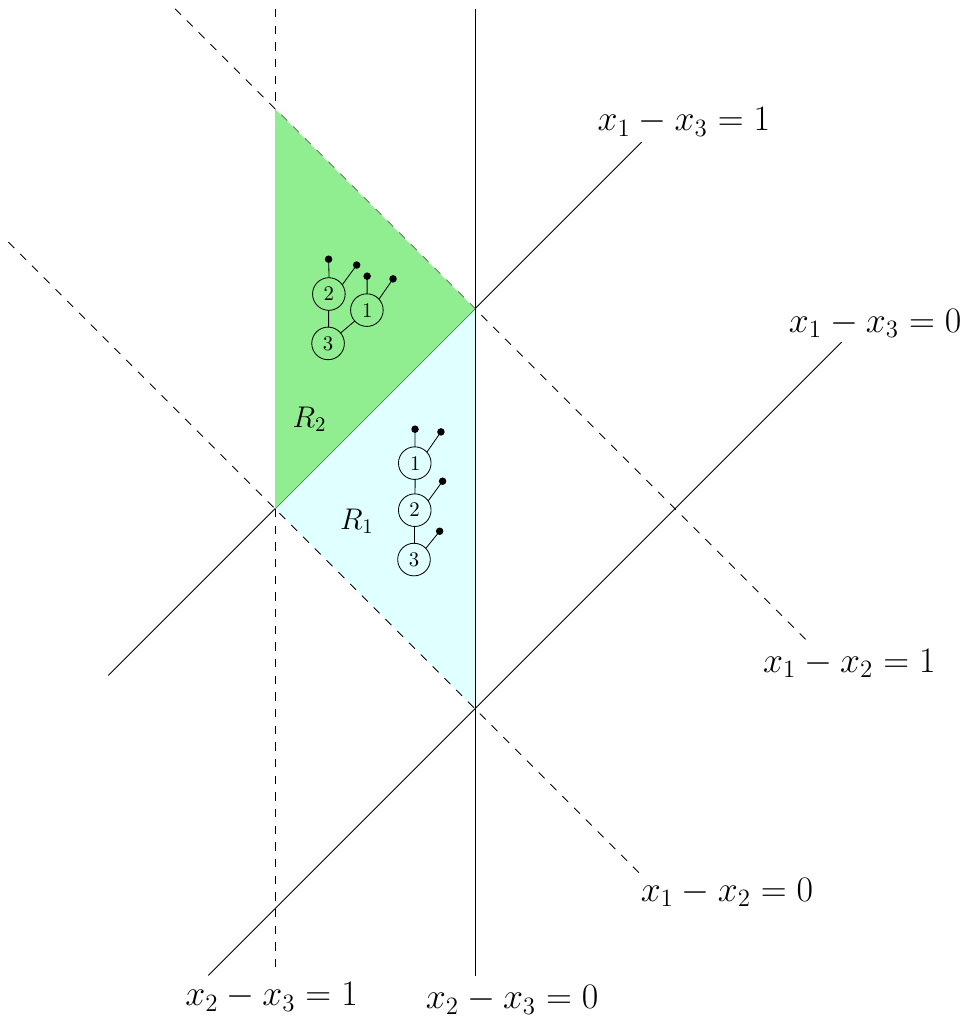}
        \caption{A hyperplane arrangement $\mA_{\bS}$. The hyperplanes represented by solid lines are in the arrangement while those represented by dashed lines are not in the arrangement.}
        \label{fig:CoTExample}
\end{figure}

\begin{example}\label{HSExample}
    For the hyperplane arrangement shown in Figure~\ref{fig:CoTExample}, let $T_1$ be the tree labeling $R_1$ and $T_2$ be the tree labeling $R_2$. 

    Note that $\mH_{R_1} = \{H_{1,2,0}, H_{1,2,1}, H_{1,3,0}, H_{1,3,1}, H_{2,3,0}, H_{2,3,1}\}$. Further, $\mH^{\text{nsep}}(T_1) = \mH_{R_1}$ and $\mH^{\mA}(T_1) = \{ H_{1,3,0}, H_{1,3,1}, H_{2,3,0}\}$. 

    Similarly, $\mH_{R_2} = \{H_{1,2,0}, H_{1,2,1}, H_{1,3,1}, H_{2,3,0}, H_{2,3,1}\}$. As $x_1 - x_3 > 1$ for all $x \in R_2$, $\mH^{\text{nsep}}(T_2) = \{H_{1,2,0}, H_{1,2,1},H_{2,3,0}, H_{2,3,1}\}$ and $\mH^{\mA}(T_2) = \{H_{1,3,1}, H_{2,3,0}\}$.
\end{example}

\begin{remark}
    Let $R$ be a region of the $m$-Catalan arrangement. Note that $\mH^{\text{nsep}}(T)$ consists of the hyperplanes $H \in \mH_R$ for which $R$ and the fundamental alcove are on the same side of $H$. For any region, there will be at least one facet-supporting hyperplane such that $R$ and the fundamental alcove are on the same side of $H$. Hence, $\mH^{\text{nsep}}(T)$ contains at least one facet-supporting hyperplane. 
\end{remark}

Then, we have the following characterization of the faces in $\mF_T$. 
\begin{lemma}\label{lem:CharOfF_TWRTHyperplanes}
    Let $T\in T^{(m)}(n)$ and let $R_T$ be the corresponding region of the $m$-Catalan arrangement. Then, for $F \in \mF_{R_T}$, we have $F \in \mF_T$ if and only if for any hyperplane $H$ of the $m$-Catalan arrangement such that $F$ lies on $H$, we have $H \notin \mH_{\bS}(T)$. 
\end{lemma}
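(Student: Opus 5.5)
The plan is to reduce the statement to two separate conditions on a face $F$ lying in the closure of $R_T$, and to match them with the two pieces $\mH^{\mA}(T)$ and $\mH^{\text{nsep}}(T)$ of $\mH_{\bS}(T)$. By Lemma~\ref{lem:bijection_general} and the construction $\beta = \Psi\circ\al$, a face $F$ belongs to $\mF_T$ if and only if two things hold. First, $F$ is not contained in any hyperplane of $\mA_{\bS}$, which is what makes $\al^{-1}\Psi^{-1}(F)$ an $\bS$-boxed tree. Second, the tree underlying the marked tree $\Psi^{-1}(F)$ is exactly $T$. Since $F \subseteq \overline{R_T}$, every hyperplane containing $F$ meets $\overline{R_T}$, so every such hyperplane lies in $\mH_{R_T}$. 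Hence the first condition says precisely that no hyperplane through $F$ lies in $\mH^{\mA}(T)$. It therefore remains to prove the following: $\Psi^{-1}(F)=(T,\mu)$ for some marking $\mu$ if and only if no hyperplane through $F$ lies in $\mH^{\text{nsep}}(T)$. This is the real content, and I expect it to be the main obstacle.

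For the forward direction, suppose $F=\Psi(T,\mu)$. Then the hyperplanes containing $F$ are exactly those of the form $\{x_i-x_j=s\}$ with $i\sim_\mu j$. I will show that each of them separates $R_T=\Phi(T)$ from the fundamental alcove, i.e.\ lies on the ``wrong side'' in the sense of the definition of $\mH^{\text{nsep}}(T)$. The argument has two steps.
\begin{itemize}
\item For a single marked edge with $i=\schild{s}{j}$, we have $i \succeq_T \schild{s}{j}$ trivially. So $\Phi(T)$ satisfies $x_i-x_j>s$, which is the opposite of the nsep side when $s>0$. When $s=0$ the marking condition forces $j<i$, and we get the corresponding opposite inequality.
\item For a path of marked edges along a cadet sequence, the hyperplane $\{x_{v_k}-x_{v_1}=\sum s_\ell\}$ is obtained by adding these inequalities. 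The strict inequalities add up, again giving the non-nsep side. In the degenerate case where the sum is $0$, the increasing-label condition $v_1<\cdots<v_k$ settles the orientation.
\end{itemize}
This careful sign bookkeeping against the conventions of $\text{Triple}_n^m$ is the step most likely to need adjusting.

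For the converse, suppose every hyperplane through $F$ lies outside $\mH^{\text{nsep}}(T)$. Let $\Psi^{-1}(F)=(T',\mu')$. By the remark following Theorem~\ref{CatalanFacesBij}, $\Phi(T')$ is incident to $F$. By the forward direction applied to $T'$, the region $\Phi(T')$ lies on the non-nsep side of every hyperplane through $F$, and by hypothesis so does $R_T$. Two Catalan regions incident to $F$ that lie on the same side of every hyperplane containing $F$ coincide, because near a relative-interior point of $F$ the only hyperplanes that matter are those containing $F$. Hence $\Phi(T')=R_T$, and since $\Phi$ is a bijection (Theorem~\ref{CatalanRegionBijection}), $T'=T$.

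Combining the two characterizations gives the lemma: $F \in \mF_T$ if and only if no hyperplane through $F$ lies in $\mH^{\text{nsep}}(T)\cup\mH^{\mA}(T)=\mH_{\bS}(T)$.
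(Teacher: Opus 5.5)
Your proof is correct and complete. It uses the same split as the paper. The $\mH^{\mA}(T)$ part comes from the Claim in the proof of Lemma~\ref{lem:bijection_general}. Your forward direction for $\mH^{\text{nsep}}(T)$ is the paper's sign bookkeeping: a marked edge $i=\schild{s}{j}$ forces $x_i-x_j>s$ on $\Phi(T)$, these inequalities add along the cadet path, and the increasing-label condition fixes the orientation when the intercept is $0$.

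The real difference is the converse. The paper handles it by asserting that $F\in\mF_T$ if and only if $F$ lies on no hyperplane of $\mA_{\bS}$. Read literally, this would make the $\mH^{\text{nsep}}(T)$ condition redundant. It is also false for faces incident to $R_T$, because such a face can come from a different tree. For example, take $n=2$, $S_{1,2}=\emptyset$, $m=0$, and $T$ the unary tree with root $2$ and child $1$, so $R_T=\{x_2<x_1\}$. The face $F=\{x_1=x_2\}$ avoids $\mA_{\bS}$. But $\Psi^{-1}(F)$ is the tree rooted at $1$ with its edge marked, so $F\notin\mF_T$. It is excluded only because $H_{1,2,0}\in\mH^{\text{nsep}}(T)$.

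The paper never proves that avoiding $\mH^{\text{nsep}}(T)$ forces the tree underlying $\Psi^{-1}(F)$ to be $T$. Your argument supplies exactly that step. You apply the forward direction to $T'$ and note that $\Phi(T')$ and $R_T$ then lie on the same side of every hyperplane through $F$. Hence they coincide near a relative-interior point of $F$, and injectivity of $\Phi$ gives $T'=T$. So your route is the paper's, made complete.
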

\begin{proof}
    Let $F \in \mF_{R_T}$ and suppose $F$ lies on a hyperplane $H_{a,b,s} \in \mH_{R_T}$. We wish to show $F \in \mF_T$ if and only if $H_{a,b,s} \notin \mH_{\bS}(T)$. 
    
    Clearly, from Lemma~\ref{lem:bijection_general}, $F \in \mF_T$ if and only if it does not lie on any hyperplane of $\mA_{\bS}$, that is, if and only if $H_{a,b,s} \notin \mH^{\mA}(T)$. 

    Now, suppose $F \in \mF_T$. As $F$ lies on $H_{a,b,s}$, by definition of $\beta$, we have $a$ and $b$ are in the same $\bS$-cadet sequence. If $s = 0$, we can assume without loss of generality that $a < b$. Then, all the nodes on the path between $a$ and $b$ will be $0$-children, and hence $a$ is before $b$ in the $\bS$-cadet sequence. Then by the definition of $\Phi$, for all $x \in R_T$, $x_a - x_b < 0$. If $s >0$, by definition of $\Psi$, we have $a$ is after $b$ in the $\bS$-cadet sequence and hence $x_a - x_b > s$ for all $x \in R_T$. Hence $H_{a,b,s} \notin \mH^{\text{nsep}}(T)$. 

    Hence the result holds. 
\end{proof}

To better understand these faces, we use a transformation on the region. 

\begin{lemma}\label{lem:transformation}
    Let $R$ be a relatively bounded region of the $m$-Catalan arrangement. Then, there is an affine transformation from $R$ to the region $y_1 < y_2 < \ldots < y_n < y_1 + 1$. 
\end{lemma}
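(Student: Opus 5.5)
The key fact I will use is that a relatively bounded region $R$ of the $m$-Catalan arrangement, being bounded modulo the line $\R(1,\ldots,1)$, is a simplex modulo that line. More precisely, I expect it to be cut out by exactly $n$ facet inequalities of the form
\[
x_{a_1} - x_{a_2} < s_1,\quad x_{a_2}-x_{a_3}<s_2,\quad \ldots,\quad x_{a_n}-x_{a_1} < s_n',
\]
forming a cyclic chain. Granting this, the coordinate change is written down directly.

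First I will show that $R$ is a bounded alcove-type region in $\R^n/\R(1,\ldots,1)$. The $m$-Catalan arrangement is a subarrangement of the affine type-$A$ arrangement with all integer shifts, so $R$ is a union of alcoves. A relatively bounded region of an arrangement whose hyperplanes are $x_i - x_j = $ constant is a polytope in the quotient space of dimension $n-1$. The claim that it is in fact a simplex is the content I must supply.

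The route I would take uses Theorem~\ref{CatalanRegionBijection}. The region $\Phi(T)$ is described by comparisons of $x_i$ against $x_j+s$, encoded by the total order $\prec_T$. Consider the points $x_j + s$ for $s\in[0;m]$: the order $\prec_T$ records how all the numbers $x_i$ interleave with them. For a bounded region, I would order the coordinates so that $x_{a_1}\le\cdots$ after reduction, writing each $x_i = z_i + k_i$ with fixed integers $k_i$ determined by $T$, chosen so that the fractional configuration satisfies $z_{a_1}<\cdots<z_{a_n}<z_{a_1}+1$.

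Concretely, boundedness means the "gap graph" is connected. Every coordinate is linked to another by a constraint $|x_i-x_j|<m+1$, which pins down integer parts relative to a root coordinate. The residues then give a cyclic ordering, and every defining inequality of $R$ becomes an inequality among the $z$'s with integer offsets. The transformation is therefore $y_\ell = x_{a_\ell} - k_{a_\ell}$, a permutation of coordinates composed with an integer translation, and hence affine.

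The main obstacle is showing that the region is exactly one alcove, $0<y_2-y_1<\cdots<y_n-y_1<1$ up to this relabeling, rather than a union of several alcoves. A priori a Catalan region could contain several alcoves. I would argue that a bounded Catalan region contains no hyperplane $x_i-x_j=k$ with $|k|\le m$ in its interior, since those are all in the arrangement. Interior hyperplanes with $|k|>m$ cannot meet a bounded region: boundedness together with the chain structure forces $|x_i-x_j|<m+1$ throughout the region. Hence $R$ is a single alcove of the affine $A_{n-1}$ arrangement. Every alcove is the image of the fundamental alcove $y_1<\cdots<y_n<y_1+1$ under an affine Weyl group element, which is an affine map, and this completes the proof. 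I would check the bound $|x_i-x_j|<m+1$ carefully; it is the crux.
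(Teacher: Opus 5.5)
There is a genuine gap, and it sits exactly at the step you flagged as the crux. It cannot be repaired. A relatively bounded region of the $m$-Catalan arrangement need not satisfy $|x_i-x_j|<m+1$, need not be a single alcove, and need not be a simplex with $n$ cyclically chained facets. Connectivity of your ``gap graph'' only bounds a difference $x_i-x_j$ by the sum of the bounds along a path, and these sums can exceed $m+1$. So the integer parts of differences between non-adjacent coordinates are not pinned down.

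Concretely, take $n=3$, $m=2$ and
\[
R=\{x\in\R^3 \mid 1<x_1-x_2<2,\ 1<x_2-x_3<2\}.
\]
On $R$ one has $2<x_1-x_3<4$, so no hyperplane $x_i-x_j=s$ with $|s|\le 2$ meets $R$. Thus $R$ is a region of the $2$-Catalan arrangement, and it is relatively bounded. Yet the hyperplane $x_1-x_3=3$, which is not in the arrangement, splits $R$ into two alcoves. Modulo $\R(1,1,1)$ the region is a parallelogram with four facets, so no affine bijection takes it onto the triangle $y_1<y_2<y_3<y_1+1$.

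The same failure already occurs for $m=1$. With $n=4$, consider the region given by $0<x_1-x_2<1$, $0<x_2-x_3<1$, $0<x_3-x_4<1$, $x_1-x_3>1$, $x_2-x_4>1$. It is a region of the $1$-Catalan arrangement and is relatively bounded. However, it has five facets and is cut by $x_1-x_4=2$.

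So the lemma is false as stated, and no argument that derives the alcove property from relative boundedness can work. Your final step is correct: a permutation of coordinates plus an integer translation carries any single alcove onto the fundamental one. This is the same map the paper writes down: sort the fractional parts of an interior point $p$ and subtract $\lfloor p_i\rfloor$.

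The paper's proof relies on the same unjustified assumption, namely that the ordering of fractional parts at $p$ persists throughout $R$. In the example above, $p=(2.5,1.2,0)$ gives $y=(x_3,\,x_2-1,\,x_1-2)$. But the point $(3.8,1.9,0)\in R$ is sent to $y_3=1.8>y_1+1=1$.

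A correct version must assume outright that $R$ is an alcove, i.e.\ that no hyperplane $x_i-x_j=k$ with $k\in\mathbb{Z}$ meets $R$. A sufficient condition is that $|x_i-x_j|<m+1$ on $R$ for all $i,j$. With that hypothesis, your last paragraph, or the paper's sorting map, finishes the proof.
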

\begin{proof}
    Let $p = (a_1, \ldots, a_n)$ be an arbitrary point in the interior of $R$. For all $i \in [n]$, let $b_i = a_i - \lfloor a_i \rfloor$. Let $\sigma$ be the permutation such that $b_{\sigma(1)} < b_{\sigma(2)} < \ldots < b_{\sigma(n)}$. 

    Then, for $y_i = x_{\sigma(i)} - \lfloor a_{\sigma(i)}\rfloor$, we have $y_1 < y_2 < \ldots < y_n < y_1 + 1$. This will hold for all $x \in R$ as the transformation is affine and preserves the relative ordering structure of the original region. 
\end{proof}

\begin{remark}
    Note that by the above transformation, any hyperplane of the form $x_i - x_j = s$ in an $\bS$-braid arrangement is transformed to a hyperplane of the form $y_{\sigma^{-1}(i)} - y_{\sigma^{-1}(j)} = s + \lfloor a_i \rfloor - \lfloor a_j \rfloor$. We denote this hyperplane by $H_{i,j,s}^Y$. 

    Further, while the transformation in Lemma~\ref{lem:transformation} is for relatively bounded regions, as the contribution of a tree is independent of $m$, we may take $m$ large enough such that the region labeled by the tree (up to additional right leaves) is relatively bounded. 
\end{remark}

\begin{definition}\label{def:tau}
    Let $T \in T^{(m)}(n)$ be a tree and $R_T$ be the corresponding region of the $m$-Catalan arrangement. Let $F$ be a face incident to $R_T$. We define $$\tau(F) : = \{i \in [n] \mid F \text{ lies on $H_i$}\},$$ where for $i \in [n-1]$, $H_i = \{y_i = y_{i+1}\}$ and $H_n = \{y_n = y_1 + 1\}$. 
\end{definition}

It is easy to see that for a face $F$ of the $m$-Catalan arrangement, $|\tau(F)| = n - \dim(F)$. 

\begin{example}
    For $n=4$ and a region $R_T$ as described above, suppose a face $F$ is characterized in the transformed coordinates by $y_1 = y_2 < y_3 = y_4 < y_1 + 1$. Then:
    \begin{itemize}
        \item $F$ lies on $H_1: y_1 = y_2$, so $1 \in \tau(F)$
        \item $F$ does not lie on $H_2: y_2 = y_3$, so $2 \notin \tau(F)$
        \item $F$ lies on $H_3: y_3 = y_4$, so $3 \in \tau(F)$
        \item $F$ does not lie on $H_4: y_4 = y_1 + 1$, so $4 \notin \tau(F)$
    \end{itemize}
    Thus $\tau(F) = \{1, 3\}$ and $\dim(F) = 2$.
\end{example}

\begin{lemma}
    Let $R$ be a region of the $m$-Catalan arrangement and let $H$ be a face-supporting hyperplane. Let $F(H)$ be the face of formed by intersection $H$ and the closure of $R$. Then if $H^Y = \{y_a - y_b = s\}$ is the transformed form of $H$ (with $s \geq 0$ and $a < b$ if $s = 0$), we have 
    $$\tau(F(H)) = \begin{cases}
        \{a, \ldots, b-1\} & \text{ if } a<b \\
        [n] \setminus \{b, \ldots, a-1\} & \text{ if } a>b
    \end{cases}. $$
\end{lemma}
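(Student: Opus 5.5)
After applying Lemma~\ref{lem:transformation}, I work entirely in the transformed coordinates. There the closure of the region is the simplex $\overline{R} = \{y_1 \le y_2 \le \cdots \le y_n \le y_1 + 1\}$, and its facets are cut out by $H_1, \ldots, H_n$. The faces of $\overline{R}$ correspond to proper subsets $\tau \subsetneq [n]$: the face for $\tau$ is where the inequalities indexed by $\tau$ are equalities and the others are strict. So the plan is to find which of the defining inequalities become equalities on $F(H) = H^Y \cap \overline{R}$, that is, the set of $i$ for which $H_i$ contains $F(H)$.

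\textbf{Case $a<b$.} Here $s \ge 0$ and the hyperplane is $y_a - y_b = s$. On $\overline{R}$ we have $y_a \le y_b$, so $y_a - y_b \le 0$. Since $H$ is face-supporting, the intersection is nonempty, which forces $s = 0$. The hyperplane is then $y_a = y_b$. On $\overline{R}$ the chain $y_a \le y_{a+1} \le \cdots \le y_b$ collapses, so all of $H_a, \ldots, H_{b-1}$ hold on $F(H)$. Conversely, I show that no other $H_i$ is forced, by exhibiting a point of $F(H)$ where every other inequality is strict. For example, take $y_i = i/(n+1)$ for $i \le a$, keep $y_i$ constant and equal to $y_a$ for $a \le i \le b$, and continue increasing by steps of $1/(n+1)$ afterwards. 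This point satisfies $y_n < y_1 + 1$ and is strictly increasing outside the block $\{a, \ldots, b\}$. Hence $\tau(F(H)) = \{a, \ldots, b-1\}$.

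\textbf{Case $a>b$.} The hyperplane is $y_a - y_b = s$ with $b < a$. On $\overline{R}$ we have $0 \le y_a - y_b \le 1$. The value $y_a - y_b = 0$ would instead be written with the indices swapped under the stated convention, so $s \ge 1$, and nonemptiness forces $s = 1$. Write $y_a = y_b + 1$ and combine it with $y_a \le y_{a+1} \le \cdots \le y_n \le y_1 + 1 \le \cdots \le y_b + 1$. This forces equality throughout the cyclic chain running from $a$ through $n$, wrapping to $1$, and on to $b$. The forced constraints are $H_a, \ldots, H_{n-1}$, $H_n$, and $H_1, \ldots, H_{b-1}$, which is exactly $[n] \setminus \{b, \ldots, a-1\}$. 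For the converse I again construct an explicit point in which $y_b < y_{b+1} < \cdots < y_a$ strictly, while all coordinates outside $[b, a]$ take the values forced by the wrap-around equalities. For instance, choose $y_b = 0$, $y_a = 1$, strictly increasing intermediate values in $(0,1)$, $y_i = 1$ for $i > a$, and $y_i = 0$ for $i < b$.

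The only delicate step is the normalization: justifying that after the transformation the intercept must be $0$ or $1$, and matching the sign and orientation conventions of $H^Y$ (with $s \ge 0$, and $a<b$ when $s=0$) to the two cases. I handle this with the bound $-1 \le y_i - y_j \le 1$ on $\overline{R}$, together with the face-supporting assumption. After that, both cases are the routine chain-collapsing argument above.
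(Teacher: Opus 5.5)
Your proposal is correct and takes the same route as the paper. In the transformed coordinates you treat the case $s=0$, $a<b$ (the chain $y_a\le\cdots\le y_b$ collapses) and the case $s=1$, $a>b$ (the wrap-around chain through $H_n$ collapses), which are exactly the paper's two cases. You are more careful than the paper in two places: you derive why only these intercepts can occur, and you exhibit explicit points showing that no other $H_i$ contains $F(H)$. One caveat applies to your proof and the paper's alike: both take from Lemma~\ref{lem:transformation} that $\overline{R}$ becomes the simplex $y_1\le\cdots\le y_n\le y_1+1$, which tacitly requires $R$ to be a single alcove (for $n=3$, $m=2$ the bounded region $1<x_2-x_1<2$, $1<x_3-x_2<2$ is a parallelogram, not a simplex), so the statement should be read under that hypothesis.
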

\begin{proof}
    Note that any face supporting hyperplane of $R$ after the transformation will be of the form $y_a - y_b = 0$ or $y_a - y_b = 1$ where $a >b$. 
    \begin{adjustwidth}{2em}{0pt}
        \noindent \textbf{Case 1}: $H^Y = \{y_a - y_b = 0\}$. 

        Without loss of generality, let $a < b$. Then, under the transformation, $F(H)$ is the face $y_1 < \ldots < y_a = \ldots = y_b < \ldots < y_n < y_1+1$. Here, $\tau(F(H)) = \{a, \ldots, b-1\}$. 

        \noindent \textbf{Case 2}: $H^Y = \{y_a - y_b = 1\}$ where $a > b$. 

        Under the transformation, $F(H)$ is the face $y_1 = \ldots =  y_b < \ldots < y_a = \ldots = y_n = y_1 + 1$. Here, $\tau(F(H)) = [n] \setminus \{b, \ldots, a-1\}$. 
    \end{adjustwidth}
\end{proof}

We call $\tau(F(H))$ the \emph{interval tuple} associated to $H$ and denote it $I(H)$. 

\begin{proposition}\label{lem:DefOfPT}
    Let $T \in \mT^{(m)}(n)$ and let $R_T = \Phi(T)$. Then, the map $\tau$ as defined in Definition~\ref{def:tau} defines a bijection between $\mF_T$ and the set $\mP_T = \{ D \sse [n] \mid D\neq [n], D\nsupseteq I(H) \text{ for all } H \in \mH_{\bS}(T)\}$. 
\end{proposition}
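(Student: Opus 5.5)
Via the affine change of coordinates of Lemma~\ref{lem:transformation} (taking $m$ large, as in the remark following it, so that $R_T$ is relatively bounded), I will work in the $y$-coordinates. There the closure of $R_T$ is the simplex $y_1 \le y_2 \le \cdots \le y_n \le y_1+1$, modulo the line spanned by $(1,\ldots,1)$. The nonempty faces of this closure are in bijection with the proper subsets $D \subsetneq [n]$. The face attached to $D$ is obtained by turning the inequalities $H_i$, $i \in D$, into equalities. Taking $D=[n]$ would force $y_1 = y_1+1$, which is impossible, and this is why $[n]$ is excluded. Since every face of the $m$-Catalan arrangement incident to $R_T$ is cut out by some of the facet inequalities, the map $\tau$ is a bijection from the set of faces $F$ incident to $R_T$ onto $\{D \subseteq [n] : D \neq [n]\}$. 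It is also inclusion-reversing: $F \subseteq \overline{F'}$ if and only if $\tau(F) \supseteq \tau(F')$. This first step is essentially bookkeeping about a simplex.

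Next I restrict to $\mF_T$. By Lemma~\ref{lem:CharOfF_TWRTHyperplanes}, a face $F$ incident to $R_T$ lies in $\mF_T$ exactly when $F$ lies on no hyperplane of $\mH_{\bS}(T)$. The hyperplanes of $\mH_{\bS}(T)$ are face-supporting, so the lemma preceding the statement gives, for each such $H$, the face $F(H) = H \cap \overline{R_T}$, with $\tau(F(H)) = I(H)$. The key claim is then:
\[F \subseteq H \iff F \subseteq F(H) \iff \tau(F) \supseteq I(H).\]
The first equivalence holds because $F \subseteq \overline{R_T}$. The second is the inclusion-reversing property from the first paragraph, applied to the face $F(H)$. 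Combining these, $F \in \mF_T$ if and only if $\tau(F) \not\supseteq I(H)$ for every $H \in \mH_{\bS}(T)$, that is, if and only if $\tau(F) \in \mP_T$. Together with the bijectivity of $\tau$ on incident faces, this gives the proposition.

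A subtle point is that $\mF_T$ should be compared with faces incident to $R_T$, and not only with faces lying in $\mF_{R_T}$. The faces $\beta(T,B)$ are exactly the $\Psi(T,\mu)$, and these are the faces incident to $\Phi(T)$ whose tree label is $T$. I will rely on the description in Lemma~\ref{lem:CharOfF_TWRTHyperplanes} as stated: every face incident to $R_T$ that lies on no hyperplane of $\mH_{\bS}(T)$ has preimage tree $T$ with an $\bS$-boxing, and the converse also holds. The main obstacle is the second equivalence in the display. One has to check that $\tau(F(H))$ computed in the transformed coordinates equals the full set of indices $i$ with $F(H) \subseteq H_i$, and not merely a subset of it. Concretely, this means showing that $F(H)$ is the face of the simplex whose active constraints are exactly $I(H)$. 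I will verify this using the two cases of the preceding lemma, $y_a = y_b$ and $y_a = y_b + 1$, by writing down an explicit point of $F(H)$ at which every other inequality is strict. I will also check that the equation of $H$ is implied by the equalities indexed by $I(H)$, which follows by summing consecutive differences.
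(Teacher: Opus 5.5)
Your proposal is correct and follows essentially the same route as the paper: both use Lemma~\ref{lem:CharOfF_TWRTHyperplanes} to reduce membership in $\mF_T$ to avoiding the hyperplanes of $\mH_{\bS}(T)$, and then use $F \subseteq H \iff F \subseteq F(H) \iff \tau(F) \supseteq I(H)$ for faces of the simplex $\overline{R_T}$. The only difference is that you spell out what the paper calls ``easy to see'': that $\tau$ maps the faces of $\overline{R_T}$ bijectively and inclusion-reversingly onto the proper subsets of $[n]$, which gives surjectivity onto $\mP_T$, and that $\tau(F(H)) = I(H)$ holds exactly, which the paper takes from the preceding lemma.
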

\begin{proof}
    It is easy to see that $\tau$ is well defined and injective. Let $F$ be a face of $R_T$. 

    Now, suppose $F \notin \mF_T$. Then $F$ lies on some hyperplane $H$ of the $m$-Catalan arrangement, and we have $F \sse F(H)$. Hence, $\tau(F) \supseteq \tau(F(H)) = I(H)$. By Lemma~\ref{lem:CharOfF_TWRTHyperplanes}, $H \in \mH_{\bS}(T)$. Hence $\tau(F) \notin \mP_T$. 

    Conversely, suppose $\tau(F) \notin \mP_T$. Then, $\tau(F) \supseteq I(H)$ for some $H \in \mH_{\bS}(T)$. But then $F \sse F(H)$, that is, $F$ lies on the hyperplane $H$. As $H \in \mH_{\bS}(T)$, by Lemma~\ref{lem:CharOfF_TWRTHyperplanes}, $F \notin \mF_T$. 
\end{proof}

The following is now a straightforward consequence of the above proposition. 
\begin{corollary}\label{Cor:ContriOfTreePT}
    Let $T \in \mT^{(m)}(n)$ and let $R = \Phi(T)$. Then,
    $$w(T) = \sli_{D \in \mP_T} (-1)^{|D|}.$$
\end{corollary}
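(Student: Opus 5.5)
The plan is to chain the two bijections already established and then regroup the signed sum. First, by the definition of $w(T)$ and Lemma~\ref{lem:bijection_general}, the map $B \mapsto \beta(T,B)$ restricts to a bijection from the set of $\bS$-boxings of $T$ onto $\mF_T$ (this is the definition of $\mF_T$, and injectivity comes from injectivity of $\beta$). It preserves the relevant statistic, since $\dim(\beta(T,B)) = |B|$. Hence
\[
w(T) = \sum_{B \in \mU_{\bS}(T)} (-1)^{n-|B|} = \sum_{F \in \mF_T} (-1)^{n-\dim(F)},
\]
exactly as recorded in the paragraph preceding the definition of face-supporting hyperplanes.

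Next I would apply Proposition~\ref{lem:DefOfPT}, which says that $\tau$ is a bijection $\mF_T \to \mP_T$. To rewrite the exponent I use the observation, stated just after Definition~\ref{def:tau}, that $|\tau(F)| = n - \dim(F)$ for every face $F$ incident to $R_T$. Substituting $D = \tau(F)$ turns each term $(-1)^{n-\dim(F)}$ into $(-1)^{|D|}$. Summing over the bijection then gives $w(T) = \sum_{D \in \mP_T} (-1)^{|D|}$.

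The only point needing care is the count $|\tau(F)| = n-\dim(F)$, together with the excluded case $D = [n]$. In the transformed coordinates of Lemma~\ref{lem:transformation}, the closure of $R_T$ is the closed simplex $y_1 \le \dots \le y_n \le y_1+1$, modulo the line $\R(1,\dots,1)$. Its faces are indexed by which of the $n$ defining inequalities $H_1,\dots,H_n$ are tight. Any proper subset of these $n$ equalities is linearly independent and cuts out a face of dimension $n - |D|$, with the lineality direction included, consistent with the paper's convention that regions have dimension $n$. All $n$ equalities at once are inconsistent, since they would force $y_1 = y_1 + 1$; this is why $D = [n]$ is excluded from $\mP_T$.

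I would also note the reduction, made in the remark after Lemma~\ref{lem:transformation}, that $R_T$ may be assumed relatively bounded by enlarging $m$. Beyond that, the argument is a direct composition of bijections with no real obstacle.
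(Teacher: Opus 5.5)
Your proposal is correct and is exactly the paper's (one-line) argument: $w(T)=\sum_{F\in\mF_T}(-1)^{n-\dim F}$ from Lemma~\ref{lem:bijection_general}, composed with the bijection $\tau:\mF_T\to\mP_T$ of Proposition~\ref{lem:DefOfPT} and the identity $|\tau(F)|=n-\dim(F)$, which you usefully justify (including why $D=[n]$ never arises) where the paper only says it is easy to see. One caution, which applies equally to the paper: your justification needs $\overline{R_T}$ to be a closed alcove, and relative boundedness alone does not give this (for $n=3$, $m=2$ the region $1<x_2-x_1<2$, $1<x_3-x_2<2$ is a bounded square, so Lemma~\ref{lem:transformation} fails for it), so the enlargement of $m$ by extra right leaves should be understood as making $\Phi(T)$ a single alcove.
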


\begin{remark}
    Note that in the definition of $\mP_T$, it is enough to consider the minimal (under inclusion) interval tuples associated to $\mH_{\bS}(T)$. That is, for $\mI_{\bS} = \{I(H) \mid H \in \mH_{\bS}(T), I(H) \not\supset I(H') \text{ for all } H' \in \mH_{\bS}(T)\}$, we have $\mP_T = \{D \sse [n] \mid D \neq [n], I \nsubseteq D \text{ for all } I \in \mI_{\bS}\}.$
\end{remark}

\begin{example}
    Consider the hyperplane arrangement shown in Figure~\ref{fig:CoTExample}. From Example~\ref{HSExample}, we know that $\mH_{\bS}(T_1) = \mH_{R_1}$, that is, $\mH_{\bS}(T_1)$ consists of all the face-supporting hyperplanes of $R_1$. As this includes all the facet-supporting hyperplanes of $R_1$, we have $\mI_{\bS}$ contains all the singleton sets. Hence, $\mP_{T_1} = \{\emptyset\}$, and by Corollary~\ref{Cor:ContriOfTreePT}, we have $w(T_1) = 1$. 

    In fact, for any region such that $\mH_{\bS}(T)$ contains all the facet-supporting hyperplanes, we will similarly have $w(T) = 1$. As $R_2$ is such a region, we can conclude that $w(T_2) = 1$. 
\end{example}

We have seen that $\mH^{\text{nsep}}(T)$ contains at least one facet-supporting hyperplane. The interval tuple corresponding to such a hyperplane is of cardinality $1$, hence $\mI_{\bS}$ contains at least one singleton set. 

We now consider the intervals in $\mI_{\bS}$ as intervals on the circle. Their union will consist of disjoint intervals $J_1, \ldots, J_t$. Note that as $\mI_{\bS}$ consists of minimal intervals and at least one of them is a singleton, $t \geq 2$. Clearly, for $I \in \mI_{\bS}$, we have $I \sse J_i$ for some $i$. 

\begin{definition}
    Let $G = ([n], E)$ be a graph where $E = \{(i, i+1) \mid  \{i, i+1\} \sse I \text{ for some } I \in \mI_{\bS}\}$ and the vertices are considered modulo $n$. Let $V_{I} = \{i \in [n] \mid i \in I \text{ for some } I \in \mI_{\bS}\}$. We define the \emph{circular connected components} of $\mI_{\bS}$ to be the vertex sets of the connected components of $G[V_{I}]$. 

    Further, for a circular connected component $J$ of $\mI_{\bS}$, we define $\mJ = \{I \in \mI_{\bS} \mid I \sse J\}$. 
\end{definition}

\begin{example}\label{CCCexample}
    For $n = 8$, suppose $\mI_{\bS} = \{\{1,2\}, \{2,3\}, \{4,5\}, \{6\}, \{8,1\}\}$. Then, we have $J_1 = \{8,1,2,3\}$, $J_2 = \{4,5\}$ and $J_3 = \{6\}$. This is illustrated in Figure~\ref{fig:CCCexample}. 

    \begin{figure}[h]
        \centering
        \includegraphics[width=0.35\linewidth]{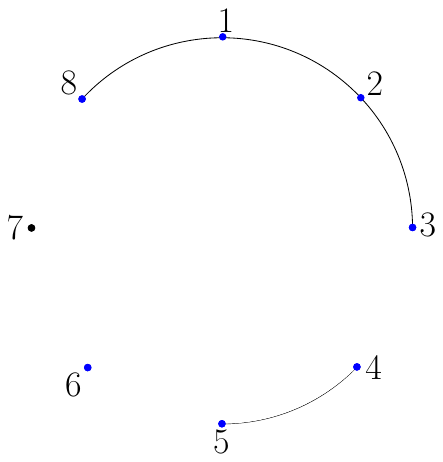}
        \caption{The corresponding graph for Example~\ref{CCCexample} with vertices in $V_I$ in blue.}
        \label{fig:CCCexample}
    \end{figure}
\end{example}

Clearly, if $J_1, \ldots, J_t$ are the circular connected components of $\mI_{\bS}$, then $\mJ_1, \ldots, \mJ_t$ partition $\mI_{\bS}$. 

\begin{definition}
    Let $J \subsetneq [n]$ be an interval and let $\mJ = \{I_1, \ldots, I_k\}$ be a set of minimal subintervals of $J$ such that $\bigcup\limits_{i = 1}^k I_i = J$. We define $$w(\mJ) = \sli_{\substack{D \sse J \\ I \nsse D \forall I \in \mJ}} (-1)^{|D|}.$$
\end{definition}

Then, 
\begin{proposition}
    Let $T \in \mT^{(m)}(n)$ and let $R = \Phi(T)$. Let $J_1, \ldots, J_t$ be the circular connected components of $\mI_{\bS}$. Let $S = [n] \setminus \bigcup\limits_{i = 1}^t J_i$. Then, 
    $$w(T) = \begin{cases}
        \prod\limits_{i = 1}^t w(\mJ_i) & \text{ if } S = \emptyset \\
        0 &\text{ if } S \neq \emptyset. 
    \end{cases}$$
\end{proposition}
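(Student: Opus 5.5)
Start from Corollary~\ref{Cor:ContriOfTreePT} together with the subsequent remark, which give
\[
w(T)=\sum_{\substack{D\subsetneq [n]\\ I\nsubseteq D\ \forall I\in\mI_{\bS}}}(-1)^{|D|}.
\]
First we drop the condition $D\neq[n]$. Since $\mI_{\bS}$ is nonempty (it contains a singleton), the set $[n]$ contains every $I\in\mI_{\bS}$ and is therefore already excluded by the other condition. So $w(T)$ is the signed count of all subsets $D\subseteq[n]$ containing no member of $\mI_{\bS}$.

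Next we factor the sum along the partition $[n]=S\sqcup J_1\sqcup\cdots\sqcup J_t$. Every $I\in\mI_{\bS}$ lies in exactly one $J_i$, namely $I\in\mJ_i$. Hence the condition ``$I\nsubseteq D$ for all $I\in\mI_{\bS}$'' is equivalent to the conjunction over $i$ of ``$I\nsubseteq D\cap J_i$ for all $I\in\mJ_i$''. It places no constraint on $D\cap S$. The map $D\mapsto (D\cap S, D\cap J_1,\dots,D\cap J_t)$ is a bijection onto tuples of subsets, and $(-1)^{|D|}$ is multiplicative over these pieces. Therefore
\[
w(T)=\Bigl(\sum_{D_0\subseteq S}(-1)^{|D_0|}\Bigr)\prod_{i=1}^t \sum_{\substack{D_i\subseteq J_i\\ I\nsubseteq D_i\ \forall I\in\mJ_i}}(-1)^{|D_i|}.
\]
The first factor equals $1$ if $S=\emptyset$ and $0$ otherwise, by the binomial identity. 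The $i$-th factor in the product is by definition $w(\mJ_i)$.

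The remaining step, and the only real point of care, is to check that each $\mJ_i$ meets the hypotheses of the definition of $w(\mJ)$. First, $J_i\subsetneq[n]$ and $J_i$ is a (circular) interval. This holds because $t\ge 2$ (noted before the definition), so no component is all of $[n]$. Second, the members of $\mJ_i$ are minimal subintervals whose union is $J_i$. This follows from the construction: $J_i$ is the vertex set of a connected component of $G[V_I]$, so it is a circular arc. Each vertex of $J_i$ lies in some $I\in\mI_{\bS}$, and that $I$ is an arc inside the component, so $I\subseteq J_i$. Minimality is inherited from $\mI_{\bS}$. Here we have to treat $J_i$ as an interval of the circle, possibly wrapping past $n$, exactly as in the definition of circular connected components. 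Once these checks are done, the stated formula follows.
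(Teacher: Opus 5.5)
Your proof is correct and is essentially the paper's argument: split $D$ into $D\cap S$ and the pieces $D\cap J_i$, factor the signed sum, and use that $\sum_{D^*\subseteq S}(-1)^{|D^*|}$ equals $1$ if $S=\emptyset$ and $0$ otherwise; your remark that the condition $D\neq[n]$ is automatic because $\mI_{\bS}\neq\emptyset$ spells out a point the paper passes over. The closing hypothesis check is not needed for the identity, since each $w(\mJ_i)$ is just the displayed sum; if you keep it, justify $J_i\subsetneq[n]$ by noting that a singleton $\{k\}\in\mI_{\bS}$ is its own component, because minimality forbids any other member of $\mI_{\bS}$ from containing $k$. Do not rely on $t\ge 2$ for this, since that claim can fail (e.g.\ $n=2$, $S_{1,2}=\{2\}$, $R_T=\{1<x_1-x_2<2\}$ gives $\mI_{\bS}=\{\{2\}\}$).
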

\begin{proof}
    Let $D \in \mP_T$, let $D_i = \{a \in D \mid a \in J_i\}$ and let $D^* = \{a \in D \mid a \in S\}$. Clearly, the $D_i$'s and $D^*$ partition $D$. 

    Further, 
    \begin{align*}
        w(T) = \sli_{D \in \mP_T} (-1)^{|D|} &= \sli_{D \in \mP_T}(-1)^{|D^*|} \prod\limits_{i = 1}^t (-1)^{|D_i|} \\&= \left(\sli_{D^* \sse S} (-1)^{|D^*|}\right)\left(\prod\limits_{i = 1}^t \sli_{\substack{D_i \sse J_i\\ I \nsse D_i \forall I \in \mJ_i}} (-1)^{|D_i|}\right) \\
        &= \sli_{D^{*} \sse S} (-1)^{|D^*|}\left(\prod\limits_{i = 1}^t w(\mJ_i)\right).
    \end{align*}
    Now, $\sli_{D^* \sse S} (-1)^{|D^*|} = \begin{cases}
        1 &\text{ if } S = \emptyset \\
        0 &\text{ if } S \neq \emptyset. 
    \end{cases}$ The result follows.  
\end{proof}

\begin{remark}
    It is easy to see that for $\mJ = \{J\}$, we have $w(\mJ) = (-1)^{|J| + 1}$. However, the value of $w(\mJ)$ is more complicated to determine in general.

    Let $F = \tau^{-1}(J)$ and suppose $\mJ = \{I_1, \ldots, I_k\}$. Let $F_i = \tau^{-1}(I_i)$. Then, geometrically, $w(\mJ)$ is the sum of $(-1)^{n - \dim(F')}$ over faces $F'$ of $R$ containing $F$ that are not contained in any of the faces $F_i$.
\end{remark}

\section{Acknowledgments}

We owe many thanks to Olivier Bernardi for proposing this problem and his invaluable guidance throughout this project. We thank Nathan Williams for reviewing our paper and giving us valuable feedback. We also thank the PRIMES organizers for their continued support throughout the project.

\printbibliography

\end{document}